\newcommand{\R}{\mathbb{R}}
\newcommand{\F}{\mathcal{F}}
\newcommand{\Prob}{\mathbb{P}}
\newcommand{\pa}{\partial}
\newcommand{\E}{\mathbb{E}}
\newtheorem{theorem}{Theorem}[section]
\theoremstyle{definition}
\newtheorem{definition}[theorem]{Definition}
\theoremstyle{remark}
\newtheorem{remark}[theorem]{Remark}
\title{Projection methods for stochastic differential equations with conserved quantities}
\author{Weien Zhou\thanks{College of Science, National University of Defense Technology, Changsha 410073, China. (weienzhou@outlook.com).}, Liying Zhang\thanks{School of Mathematical Science, China University of Mining and Technology, Beijing 100083, China. (lyzhang@lsec.cc.ac.cn).}, Jialin Hong\thanks{Institute of Computational Mathematics and Scientific/Engineering Computing, Chinese Academy of Sciences, Beijing, 100190, China. (hjl@lsec.cc.ac.cn).}, Songhe Song\thanks{State Key Laboratory of High Performance Computing, National University of Defense Technology, Changsha 410073, China. (shsong@nudt.edu.cn).}}
\date{}
\begin{document}

\maketitle
\begin{abstract}
	In this paper, we consider the numerical methods preserving single or multiple conserved quantities, and these methods are able to reach high order of strong convergence simultaneously based on some kinds of projection methods. The mean-square convergence orders of these methods under certain conditions are given, which can reach order 1.5 or even 2 according to the supporting methods embedded in the projection step. Finally, three numerical experiments are taken into account to show the superiority of the projection methods.\\
	
\bf{Keywords: Stochastic differential equations, Conserved quantities, Projection methods, Mean-square convergence}

\end{abstract}

\section{Introduction}\label{s:intro}
As the process of modeling in science and engineering becomes much more realistic, stochastic differential equations (SDEs) have to be taken into consideration to characterize the stochastic effects. Since most SDEs cannot be solved explicitly, there have been large numbers of works in developing effective and reliable numerical methods for SDEs (see e.g. \cite{burrage2004numerical,kloeden1992numerical,milstein2004stochastic} and references therein).

It is a significant issue whether or not some geometric features of SDEs are preserved in performing reliable numerical methods, especially for long time simulations, which is as important as the deterministic case \cite{hairer2006geometric}. In this aspect, a range of numerical methods have been proposed to preserve different properties of SDEs with special forms (e.g. \cite{hong2007predictor,milstein2002symplectic,misawa2000energy}). Among these properties is the conserved quantity which is intrinsic and essential for some stochastic systems. As a matter of fact, it is crucial to construct numerical methods which can preserve the conserved quantity if the original systems possess.
\cite{misawa2000energy} proposes an energy conservative stochastic difference scheme for one-dimensional stochastic canonical Hamiltonian system and gives the corresponding local errors. In \cite{hong2011discrete}, a discrete gradient method is constructed to preserve single  conserved quantity, which is of mean-square order 1. \cite{chen2014conservative} generalizes the average vector field (AVF) method to stochastic Poisson systems, and also construct first mean-square order method with single conserved quantity. Then \cite{cohen2014energy} proposes a novel conservative method for more general SDEs and analyzes the strong and weak order. In \cite{hong2015preservation}, the authors obtain some conditions for a series of stochastic Runge-Kutta (SRK) methods preserving quadratic conserved quantity.
\cite{Malham2007} presents Lie group integrators for SDEs with
non-commutative vector fields whose solution evolves on a smooth finite-dimensional manifold which can be formed by a conserved quantity.

To the best of our knowledge, the existing  numerical methods cannot preserve general multiple conserved quantities (not limited to quadratic ones) of SDEs. Therefore, we mainly focus on constructing numerical methods preserving the single or multiple conserved quantities.  Another key point is to improve the accuracy of the methods with the conservative property. Based on the two requirements above, we consider the projection methods combined with supporting methods that are common and convenient to solve SDEs, such as Euler, Milstein and high order It\^o-Taylor methods. Then we obtain that the proposed methods share the similar convergence order to the supporting methods that we embed in the projection steps. Therefore, the projection methods are able to  preserve the conserved quantities of SDEs exactly and reach high order of strong convergence simultaneously. The approaches of projection  here are available to any dimensional system greater than one and are not restricted to even-dimensional systems such as stochastic Hamiltonian systems.

The rest of the paper is organized as follows. Sect. \ref{s:preliminary} gives some preliminaries about SDEs with conserved quantities and some necessary notations. In Sect. \ref{s:single}, we outline the basic ideas of projection methods and derive the convergence order in the mean-square sense for stochastic systems with single conserved quantity. In Sect. \ref{s:multi}, we study the corresponding projection methods in the case of systems possessing multiple conserved quantities.  Finally, in Sect. \ref{s:num}, three typical numerical experiments show that our proposed numerical methods are of appropriate convergence order, and have advantages in preserving conserved quantities, which verify the theoretical analysis in previous sections.


Throughout the paper, we will use the following basic notations:
\begin{itemize}
	\item $(\cdot)^\top$: The transpose operator for a vector or matrix.
	\item $(\cdot)^{-1}$: The inverse operator for a invertible square matrix.
	\item $|\cdot|$: The trace norm for vector or matrix defined by $|x| = \sqrt{Tr(x^\top x)}$.
	\item $C_b^k(\R^{d_1}, \R^{d_2})$: The set of $k$ times continuously differential functions $f: \R^{d_1}\rightarrow \R^{d_2}$ with uniformly bounded derivatives up to order k.
\end{itemize}

\section{Preliminary}\label{s:preliminary}
Consider the initial value problem for the general $d$-dimensional autonomous SDE in the sense of Stratonovich:
\begin{equation}\label{e:sde}
\left\{
	\begin{aligned}
	dX(t) &= f\big(X(t)\big)dt + \sum_{r=1}^m g_r\big(X(t)\big) \circ dW_r(t), \quad t\in [0, T],\\
	X(0) &= X_0,
\end{aligned}\right.
\end{equation}
where $X(t)$ is a $d$-dimensional column vector, $W_r(t), r = 1,\dots,m $, are $m$ independent one-dimensional standard Wiener processes defined on a complete filtered probability space $(\Omega , \F, \Prob, \{\F_t\}_{t \geq 0})$ fulfilling the usual conditions, $f$ and $g_r$ are $\R^d$-valued functions satisfying the conditions under which \eqref{e:sde} has a unique solution (see for example \cite{kloeden1992numerical}). $X_0$ is $\F_{0}$-measurable random variable with $\E|X_0|^2<\infty$.

\begin{definition}\label{d:i}
	A differentiable scalar function $I(x)$ is called a conserved quantity (also called an invariant or a first integral) of SDE \eqref{e:sde} if
	\begin{equation}\label{e:def_invrant}
	\nabla I(x)^\top f(x) = 0, \quad \nabla I(x)^\top g_r(x) = 0, \quad r=1,\dots,m,
	\end{equation}
	where $\nabla I(\cdot) = (\frac{\pa I}{\pa x_1}, \dots, \frac{\pa I}{\pa x_d})^\top$ is the gradient of $I(\cdot)$.
\end{definition}

If $X(t)$ is the exact solution of \eqref{e:sde} with the conserved quantity $I(x)$, then we have
\begin{equation}\label{e:dI}
	\begin{aligned}
		dI\big(X(t)\big) = \nabla I\big(X(t)\big)^\top f\big(X(t)\big)dt + \sum_{r=1}^{m} \nabla I\big(X(t)\big)^\top g_r\big(X(t)\big)\circ dW_r(t)
		=0,
	\end{aligned}
\end{equation}
which is obtained by It\^{o}'s formula in Stratonovich sense together with 
\eqref{e:def_invrant}. This simply implies that $I\big(X(t)\big) = I\big(X_0\big)$ holds along the exact solution of  \eqref{e:sde} almost surely. If the initial value $X_0$ is taken to be deterministic, then $I\big(X(t)\big)$ is a deterministic quantity independent of time. From this point of view, the definition above gives a stochastic version of the conserved quantity that is similar to the case of deterministic system. Therefore, we naturally hope that the numerical solutions preserve this property in sample path way as well.

Here are some examples of stochastic systems with conserved quantity. One is the stochastic canonical Hamiltonian system ($d = 2n$) \cite{misawa2000energy}
\begin{equation}\label{e:chamil}
	dX(t) = J^{-1} \nabla H\big(X(t)\big) \left( dt+\sum_{r=1}^m c\circ dW_r(t)\right),
\end{equation}
where $J=\bigl(\begin{smallmatrix}0& I_n\\-I_n&0
\end{smallmatrix}\bigr)$ is a standard $2n$-dimensional symplectic matrix with $n\times n$ identity matrix $I_n$, and $c\in \R$ is a constant parameter. It is an example of \eqref{e:sde} possessing an conserved quantity $H(x)$ which is the Hamiltonian function of \eqref{e:chamil}. A more general stochastic system given by 
\begin{equation}
	dX(t) = J^{-1} \nabla H\big(X(t)\big) dt+ \sum_{r=1}^m J^{-1}\nabla H_r\big(X(t)\big)\circ dW_r(t)
\end{equation}
also has a conserved quantity $I(x)$, if $\{I, H\}=0$ and $\{I, H_r\}=0, r=1,\dots, m$, with Poisson bracket $\{I_1,I_2\}:=\nabla I_1^\top J^{-1} \nabla I_2$ (see \cite{anton2014symplectic,milstein2002numerical} for details). For stochastic Hamiltonian systems, if we denote $X = (P,Q)$ and $X_0 = (p,q)$, the transformation $(p,q)\mapsto (P,Q)$ preserves symplectic structure $dP\wedge dQ = dp\wedge dq$. Many efforts have been made to develop symplectic methods for such systems \cite{milstein2002symplectic,misawa2010symplectic}. However, in general, these methods do not preserve the Hamiltonian $H(x)$ exactly.

Another example is the stochastic Poisson system
\begin{equation}\label{e:poi}
	dX(t) = B\big(X(t)\big)\nabla I\big(X(t)\big)(dt + c\circ dW(t)).
\end{equation}
Here $B(x)$ is a $d \times d$ smooth skew-symmetric matrix-valued function so that $I(x)$ is a conserved quantity of \eqref{e:poi}. \cite{cohen2014energy} mainly studies the energy-preserving scheme for \eqref{e:poi}, which turns out to be mean-square order 1 (for a general multiple noises case see \cite{chen2014conservative} for details).

In the next two sections, we will investigate the projection methods for system \eqref{e:sde} with single and multiple conserved quantities in details, and derive their convergence order in mean-square sense.

\section{Single conserved quantity}\label{s:single}
First we consider the SDE with only one (or only one is known) conserved quantity. Assuming that SDE \eqref{e:sde} possesses a conserved quantity $I : \R^d  \mapsto \R$, then owing to equation \eqref{e:dI}, we have
\begin{equation*}
	X(t)\in \mathcal{M}_{X_0}:=\Big\{x\in\R^d \mid I(x)=I(X_0)\,\Big\}, \quad t\in[0,T], \quad \text{a.s.}
\end{equation*}
This means that $X(t)$ will be confined to the invariant submanifold $\mathcal{M}_{X_0}$ almost surely, which is a direct geometric property for systems possessing a conserved quantity. In addition, SDE \eqref{e:sde}
with a conserved quantity $I(x)$ can be rewritten to another form as well by the following theorem.

\begin{theorem}
	SDE \eqref{e:sde} has the equivalent skew-gradient (SG) form
	\begin{equation}\label{e:SG}
		dX(t) = S\big(X(t)\big)\nabla I\big(X(t)\big)dt + \sum_{r=1}^{m}T_r\big(X(t)\big)\nabla I\big(X(t)\big)\circ dW_r(t),
	\end{equation}
	where $S(x)$ and $T_r(x)$ are skew-symmetric matrices such that $S(x)\nabla I(x)=f(x)$ and $T_r(x)\nabla I(x)=g_r(x)$, $r=1,\dots, m$.
\end{theorem}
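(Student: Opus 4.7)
The plan is to exhibit explicit skew-symmetric matrices $S(x)$ and $T_r(x)$ satisfying the required relations, exploiting the orthogonality $\nabla I(x)^\top f(x)=0$ and $\nabla I(x)^\top g_r(x)=0$ that comes from Definition \ref{d:i}. The observation driving the construction is the following elementary fact: given two vectors $a,b\in\R^d$ with $a^\top b=0$ and $a\neq 0$, the rank-two matrix
\begin{equation*}
    M(a,b)\;=\;\frac{b\,a^\top-a\,b^\top}{|a|^{2}}
\end{equation*}
is skew-symmetric and satisfies $M(a,b)\,a = b$, since $M(a,b)a=\frac{b(a^\top a)-a(b^\top a)}{|a|^{2}}=b$.

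First I would apply this with $a=\nabla I(x)$ and $b=f(x)$ to define
\begin{equation*}
    S(x)\;:=\;\frac{f(x)\nabla I(x)^\top-\nabla I(x)f(x)^\top}{|\nabla I(x)|^{2}},
\end{equation*}
and analogously
\begin{equation*}
    T_r(x)\;:=\;\frac{g_r(x)\nabla I(x)^\top-\nabla I(x)g_r(x)^\top}{|\nabla I(x)|^{2}},\qquad r=1,\dots,m.
\end{equation*}
These matrices are manifestly skew-symmetric. Using $\nabla I(x)^\top f(x)=0$ and $\nabla I(x)^\top g_r(x)=0$ from \eqref{e:def_invrant}, the identity $M(a,b)a=b$ gives $S(x)\nabla I(x)=f(x)$ and $T_r(x)\nabla I(x)=g_r(x)$. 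Substituting these into \eqref{e:sde} yields the skew-gradient form \eqref{e:SG}, completing the equivalence.

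The only delicate point is the regularity issue: the construction divides by $|\nabla I(x)|^{2}$, so it is only well-defined where $\nabla I(x)\neq 0$. The main obstacle, therefore, is not the algebra but ensuring the construction is valid on the invariant manifold $\mathcal{M}_{X_0}$; I would either assume (as is standard) that $I$ is such that $\nabla I$ does not vanish on the region of interest, so that $S$ and $T_r$ inherit smoothness from $f$, $g_r$ and $I$, or remark that the decomposition holds pointwise almost everywhere and that the representation \eqref{e:SG} serves as a structural device rather than a uniquely determined one. In fact, the decomposition is far from unique: any skew-symmetric perturbation $\tilde{S}(x)$ with $\tilde{S}(x)\nabla I(x)=0$ can be added to $S(x)$ (and likewise for $T_r$) without altering the drift or diffusion coefficients, so the construction above should be regarded as one canonical choice among many.
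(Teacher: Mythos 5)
Your construction is correct and coincides exactly with the paper's own ``default formula'' \eqref{e:default_formula}, which is the constructive choice the paper displays right after the theorem (deferring the full argument to \cite{chen2014conservative}); the key identity $M(a,b)a=b$ under $a^\top b=0$ is precisely what makes that formula work, and your caveat about requiring $\nabla I(x)\neq 0$ matches the paper's own stated restriction. No gaps.
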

The proof of this theorem is constructive and can be found in \cite{chen2014conservative}. In general, $S(x)$ and $T_r(x)$ are not unique. For instance, if $\nabla I(X(t))\neq 0$, one simple choice is the default formula:
\begin{equation}\label{e:default_formula}
	\begin{aligned}
		S(x)   &= \frac{f(x)\nabla I(x)^\top - \nabla I(x) f(x)^\top}{|\nabla I(x)|^2},\\
		T_r(x) &= \frac{g_r(x)\nabla I(x)^\top - \nabla I(x) g_r(x)^\top}{|\nabla I(x)|^2},\quad  r=1,\dots,m.
	\end{aligned}
\end{equation}

Based on the particular SG formula \eqref{e:SG} of \eqref{e:sde}, we are able to replace the gradient in \eqref{e:SG} with the discrete gradient, which leads to the discrete gradient method (see \cite{hong2011discrete} for details). It is beneficial to approximate the solution to SDE \eqref{e:sde} and preserve the general conserved quantity $I(x)$ simultaneously. On the other hand, another kind of approaches to achieve this goal are the projection methods, which we will apply to preserve the conserved quantity in the stochastic case.

The basic idea of projection methods is to combine an arbitrary one-step approximation $\widehat{X}_{t,x}$ starting at $x$ together with a projection $P$ onto the invariant submanifold $\mathcal{M}_x$ in every step. Thus the procedures of the methods at each step are:
\begin{enumerate}
	\item Compute the one-step approximation $\widehat{X}_{t,x}$.
	\item Compute $\lambda\in \R$, for $\bar{X}_{t,x} = \widehat{X}_{t,x} + \Phi\lambda$ \, s.t. \, $I(\bar{X}_{t,x}) = I(x)$.
\end{enumerate}
Here the vector $\Phi \in \R^d$ defines the direction of the projection, and $\lambda$ is a scalar chosen such that the new approximation $\bar{X}_{t,x}$ belongs to the invariant submanifold $\mathcal{M}_x$ properly. The general idea of the procedures is shown in Fig. \ref{f:proj}.

\begin{figure}[ht]
	\centering
	\includegraphics{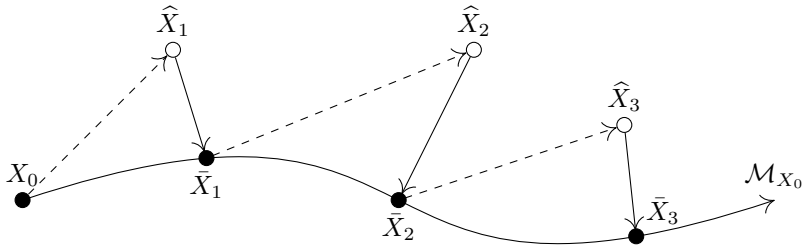}
	\caption{Basic idea of the projection methods.}
	\label{f:proj}
\end{figure}

In fact the projection direction $\Phi$ is not unique here, and the standard orthogonal projection chooses $\nabla I(\bar{X}_{t,x})$ as $\Phi(x,\bar{X}_{t,x})$ \cite[Chap. IV]{hairer2006geometric}. In order to reduce computational cost we just use $\nabla I(\widehat{X}_{t,x})$ or $\nabla I(x)$ to replace $\nabla I(\bar{X}_{t,x})$. So we combine the relationship and get $\bar{X}_{t,x}$ above by solving a non-linear equation with respect to $\lambda$. In fact, we find that $\lambda$ approaches 0 at each step, so we just set $\lambda_0 = 0$ in Newton iteration to solve it.

Since we are mainly interested in the geometric features of the numerical solutions in path-wise way, the mean-square convergence is considered here. Throughout the paper, equidistant time step-size $h$ will be used in time discretization $\{0=t_0,\dots,t_N=T\}$. In the sequel, let $X_k$ be the numerical approximation of SDE \eqref{e:sde} at time $t_k$, $k=0,\dots,N$.

\begin{definition}
	A numerical method $X_k$ is said to have mean-square order $p$, if
	\begin{equation}
		\big(\E|X_k - X(t_k)|^2 \big)^{\frac{1}{2}} = O(h^p),
		\quad k = 0,\dots,N.
	\end{equation}
\end{definition}

Next, we consider the mean-square convergence of the projection method with a particular supporting one-step method $\widehat{X}$. Suppose that the supporting one-step method $\widehat{X}$ is of mean-square order $p$, then we will prove that the projection method $\bar{X}$ with it has the same mean-square order under certain conditions. Here we will make use of the following theorems proposed in \cite{milstein2004stochastic}.

\begin{theorem}\label{t:p1p2}
	\cite{milstein2004stochastic} Suppose that the one-step approximation $\bar{X}_{t,x}(t+h)$ has order of accuracy $p+1$ for the mean deviation and order of accuracy $p+\frac{1}{2}$ for the mean-square deviation; more precisely, for any $0\leq t \leq T-h$, $x\in \R^d$ the following inequalities hold:
	\begin{equation}\label{c:p1}
		|\E(X_{t,x}(t+h) - \bar{X}_{t,x}(t+h))| \leq K(1+|x|^2)^{1/2} h^{p+1},
	\end{equation}
	\begin{equation}\label{c:p2}
		\big(\E|X_{t,x}(t+h) - \bar{X}_{t,x}(t+h)|^2\big)^{1/2} \leq K(1+|x|^2)^{1/2} h^{p+\frac{1}{2}},
	\end{equation}
	Also let
	\begin{equation}
		p \geq \frac{1}{2}.
	\end{equation}
	Then for any $N$ and $k=0,1,\dots N$ the following inequality holds:
	\begin{equation}
		(\E|X_{t_0,X_0}(t_k)-\bar{X}_{t_0,X_0}(t_k)|^2)^{1/2} = O(h^{p}),
	\end{equation}
	i.e. the mean-square order of accuracy of the method constructed using the one-step approximating $\bar{X}_{t,x}$ is $p$.
\end{theorem}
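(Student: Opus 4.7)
The plan is to follow the classical fundamental convergence theorem argument of Milstein, which decomposes the global error into a sum of local one-step errors and controls the accumulation of (i) the systematic (mean) part and (ii) the centered (martingale) part separately. Denote by $X_{s,y}(t)$ the solution of \eqref{e:sde} starting from $y$ at time $s$, and set the global error $e_k := X_{t_0,X_0}(t_k) - \bar{X}_{t_0,X_0}(t_k)$ and the local error $\rho_k := X_{t_k,\bar{X}_k}(t_{k+1}) - \bar{X}_{t_k,\bar{X}_k}(t_{k+1})$, where I abbreviate $\bar{X}_k := \bar{X}_{t_0,X_0}(t_k)$.

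First, I would write the error recursion
\begin{equation*}
e_{k+1} = \bigl(X_{t_k,X(t_k)}(t_{k+1}) - X_{t_k,\bar{X}_k}(t_{k+1})\bigr) + \rho_k,
\end{equation*}
so that the first bracket measures flow sensitivity to initial data while $\rho_k$ is the fresh local error injected at step $k$. Standard mean-square stability of the SDE flow (Lipschitz/Gronwall on \eqref{e:sde} under the usual linear growth and Lipschitz hypotheses on $f,g_r$) gives an estimate of the form $\E|X_{t_k,X(t_k)}(t_{k+1}) - X_{t_k,\bar{X}_k}(t_{k+1})|^2 \leq (1+Ch)\,\E|e_k|^2$. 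Next I would split $\rho_k = \mu_k + \nu_k$, with $\mu_k := \E[\rho_k \mid \F_{t_k}]$ and $\nu_k := \rho_k - \mu_k$. By hypotheses \eqref{c:p1}--\eqref{c:p2}, conditionally on $\F_{t_k}$,
\begin{equation*}
|\mu_k| \leq K(1+|\bar{X}_k|^2)^{1/2} h^{p+1}, \qquad \bigl(\E[|\nu_k|^2 \mid \F_{t_k}]\bigr)^{1/2} \leq K(1+|\bar{X}_k|^2)^{1/2} h^{p+1/2}.
\end{equation*}

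The heart of the argument is squaring $e_{k+1}$, taking expectations, and exploiting that $\nu_k$ is $\F_{t_k}$-conditionally centered so its cross products with any $\F_{t_k}$-measurable quantity vanish in expectation. This yields a recursion of the shape
\begin{equation*}
\E|e_{k+1}|^2 \leq (1+Ch)\,\E|e_k|^2 + 2\sqrt{\E|e_k|^2}\,K(1+\E|\bar{X}_k|^2)^{1/2} h^{p+1} + K^2(1+\E|\bar{X}_k|^2)\,h^{2p+1},
\end{equation*}
where the surviving $\mu_k$-cross term carries a factor $h^{p+1}$ (accumulating $N$ times gives $h^p$ in standard deviation) and the $\nu_k$-square term carries $h^{2p+1}$ (accumulating $N$ times again gives $h^{2p}$). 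An a priori $L^2$ bound $\sup_k \E|\bar{X}_k|^2 \leq C_T$, obtained from the same local-error hypotheses applied to $\bar{X}_k$ itself, lets me absorb the $(1+\E|\bar X_k|^2)$ factors into a constant.

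Iterating this recursion with $e_0 = 0$ and applying discrete Gronwall gives $\E|e_k|^2 \leq C\,h^{2p}$ for all $k \leq N = T/h$. The role of the hypothesis $p \geq 1/2$ is precisely to keep the cross term $\sqrt{\E|e_k|^2}\cdot h^{p+1}$ from dominating: with $p \geq 1/2$ one has $h^{p+1} \leq h^{3/2}$, so after squaring and summing over $N = T/h$ steps this contribution is $O(h^{2p})$ rather than worse. The main obstacle I foresee is the careful bookkeeping in the squared recursion — making sure every cross term between $\mu_k$, $\nu_k$, and the flow-sensitivity piece either vanishes by conditional centering or contributes at the correct order — together with verifying the a priori moment bound on $\bar{X}_k$, which is what actually enforces the role of the condition $p \geq 1/2$.
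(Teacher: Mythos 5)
The paper does not actually prove this statement: it is quoted verbatim (as Theorem \ref{t:p1p2}) from Milstein and Tretyakov \cite{milstein2004stochastic}, and the authors rely on that reference for the proof. Your sketch reconstructs essentially the standard argument from that source: decompose $e_{k+1}$ into the flow-propagated error plus the fresh local error, split the local error into its conditional mean $\mu_k$ and the centered part $\nu_k$, and track the two accumulation rates ($h^{p+1}$ per step for the systematic part, $h^{2p+1}$ per step for the variance part), closing with discrete Gronwall. The final recursion you write down is the correct one, and the conclusion $\E|e_k|^2 \le Ch^{2p}$ follows from it.

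One step, however, is justified too quickly and is exactly the delicate point of Milstein's proof. You claim the cross terms are controlled because $\nu_k$ is conditionally centered and so ``its cross products with any $\F_{t_k}$-measurable quantity vanish.'' But the flow-sensitivity piece $S_k := X_{t_k,X(t_k)}(t_{k+1}) - X_{t_k,\bar{X}_k}(t_{k+1})$ is \emph{not} $\F_{t_k}$-measurable (it involves the Wiener increments on $[t_k,t_{k+1}]$), so $\E[S_k^\top \nu_k]$ does not vanish by conditioning alone. The standard fix is to write $S_k = e_k + Z_k$, where $e_k$ is $\F_{t_k}$-measurable and the remainder satisfies $\E[|Z_k|^2\mid\F_{t_k}] \le C|e_k|^2 h$ (a separate lemma on the mean-square dependence of the flow on initial data). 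Then $\E[e_k^\top\rho_k]=\E[e_k^\top\mu_k] = O(\sqrt{\E|e_k|^2}\,h^{p+1})$ by conditioning and \eqref{c:p1}, while $\E[Z_k^\top\rho_k]$ is bounded by Cauchy--Schwarz as $O(\sqrt{\E|e_k|^2\,h}\cdot h^{p+1/2}) = O(\sqrt{\E|e_k|^2}\,h^{p+1})$ --- the same order, which is why your recursion still comes out right. You should also state the a priori bound $\sup_k\E|\bar{X}_k|^2\le C$ as a separate preliminary step rather than as an afterthought, since the error recursion presupposes it. With those two repairs your argument is the proof in \cite{milstein2004stochastic}.
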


\begin{theorem}\label{t:pp1p2}
	\cite{milstein2004stochastic} Let the one-step approximation $\bar{X}_{t,x}$ satisfy the conditions of Theorem \ref{t:p1p2}. Suppose that another approximation $\widehat{X}_{t,x}$ is such that
	\begin{equation}\label{e:pp1}
		|\E(\widehat{X}_{t,x} - \bar{X}_{t,x})| = O(h^{p+1}),
	\end{equation}
	\begin{equation}
		(\E|\widehat{X}_{t,x} - \bar{X}_{t,x}|^2)^{1/2} = O(h^{p+\frac{1}{2}})
	\end{equation}
	with the same $p$ for $\bar{X}_{t,x}$ in Theorem \ref{t:p1p2}. Then the method based on the one-step approximation $\widehat{X}_{t,x}$ has the same mean-square order of accuracy as the method based on $\bar{X}_{t,x}$, i.e. its mean-square order is equal to $p$ as well.
\end{theorem}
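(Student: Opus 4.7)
The plan is to reduce Theorem \ref{t:pp1p2} to a direct invocation of Theorem \ref{t:p1p2} applied to $\widehat{X}_{t,x}$ itself. Concretely, if I can verify that $\widehat{X}_{t,x}$ inherits the one-step bounds \eqref{c:p1} and \eqref{c:p2} with the same exponent $p$, then Theorem \ref{t:p1p2} immediately delivers global mean-square order $p$ for the scheme generated by $\widehat{X}_{t,x}$. So the whole job is to transfer the local bounds from $\bar{X}_{t,x}$ to $\widehat{X}_{t,x}$ using the closeness hypothesis between the two one-step approximations.

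This transfer is a clean triangle-inequality argument. For the mean deviation, I would decompose
\begin{equation*}
X_{t,x}(t+h) - \widehat{X}_{t,x} = \bigl(X_{t,x}(t+h) - \bar{X}_{t,x}\bigr) + \bigl(\bar{X}_{t,x} - \widehat{X}_{t,x}\bigr),
\end{equation*}
take expectations, and apply the triangle inequality on $\R^d$: the first piece is $K(1+|x|^2)^{1/2}h^{p+1}$ by \eqref{c:p1}, and the second is $O(h^{p+1})$ by hypothesis \eqref{e:pp1}. Summing gives the $p+1$ bound required by \eqref{c:p1} for $\widehat{X}_{t,x}$. For the mean-square deviation, I would use Minkowski's inequality on the same decomposition,
\begin{equation*}
\bigl(\E|X_{t,x}(t+h) - \widehat{X}_{t,x}|^2\bigr)^{1/2} \leq \bigl(\E|X_{t,x}(t+h) - \bar{X}_{t,x}|^2\bigr)^{1/2} + \bigl(\E|\bar{X}_{t,x} - \widehat{X}_{t,x}|^2\bigr)^{1/2},
\end{equation*}
where the first term is $O(h^{p+1/2})$ by \eqref{c:p2} and the second is $O(h^{p+1/2})$ by the second closeness assumption. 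This reproduces \eqref{c:p2} for $\widehat{X}_{t,x}$.

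The only subtlety I expect, and the main thing to pin down carefully, is the dependence on $x$ hidden inside the $O(\cdot)$ symbols. Theorem \ref{t:p1p2} needs the local-error constants to be of the form $K(1+|x|^2)^{1/2}$, so the hypotheses on $\widehat{X}_{t,x} - \bar{X}_{t,x}$ must be read as allowing at most such polynomial growth in $|x|$. This is the standard reading within the Milstein framework, where the implicit constants in the local-error comparison between two one-step schemes carry precisely this factor; once this is acknowledged, the triangle-inequality estimate above preserves the required dependence with a possibly enlarged constant. With \eqref{c:p1} and \eqref{c:p2} now established for $\widehat{X}_{t,x}$ (and the condition $p \geq 1/2$ carried over unchanged), Theorem \ref{t:p1p2} applied to $\widehat{X}_{t,x}$ yields $(\E|X_{t_0,X_0}(t_k) - \widehat{X}_{t_0,X_0}(t_k)|^2)^{1/2} = O(h^p)$, which is the desired conclusion.
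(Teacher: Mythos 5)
Your argument is correct: the paper itself states this result without proof (it is quoted directly from Milstein and Tret'yakov), and your triangle-inequality transfer of the local bounds \eqref{c:p1}--\eqref{c:p2} from $\bar{X}_{t,x}$ to $\widehat{X}_{t,x}$, followed by a direct application of Theorem \ref{t:p1p2}, is precisely the standard argument in that reference. You are also right to single out the one genuine subtlety, namely that the $O(\cdot)$ bounds in the hypotheses must be read as uniform in $x$ with at most $K(1+|x|^2)^{1/2}$ growth, since otherwise the hypotheses of Theorem \ref{t:p1p2} would not actually be reproduced for $\widehat{X}_{t,x}$.
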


Usually the increments $\Delta W_r(h):=W_r(t_n+h)-W_r(t_n)$, $r=1,\dots,m$, in common methods are represented by $\sqrt{h}\xi_r$, and $\xi_r$ are independent $N(0,1)$-distributed random variables. Because of the unbounded property of the standard Gaussian random variable, the independent Gaussian random variables $\Delta W_r$ used here should be truncated as $\Delta \widehat{W_r}(h):=\sqrt{h}\zeta_h^r$
\begin{equation*}
	\zeta_h^r =
	\begin{cases}
		\xi^r, & \,|\xi^r|\leq A_h \\
		A_h,   & \,\xi^r > A_h\\
		-A_h,  & \,\xi^r < -A_h
	\end{cases}
\end{equation*}
with $A_h:=\sqrt{2k|ln(h)|}$, and $k$ is an positive integer (see \cite{milstein2002numerical} for details). So the following inequality holds
\begin{equation*}
	\E(\xi^r-\zeta^r_h)^2\leq h^k.
\end{equation*}
From now on, we assume the  increments in the supporting method $\widehat{X}$ are replaced by the truncated forms $\Delta\widehat{W}(h)$ with proper integer $k$. The truncated methods will remain the same mean-square order due to Theorem \ref{t:pp1p2}.

Based on above preparation works, we present our first main result in this section:
\begin{theorem}\label{t:single}
	Assume that a supporting method $\widehat{X}$ applying to \eqref{e:sde} satisfies \eqref{c:p1} and \eqref{c:p2} with $p$, and the matrix functions $S,T_r \in C_b^2(\R^d,\R^{d\times d})$, $r=1,\dots,m$, in the equivalent SG form \eqref{e:SG}. Moreover, assume that $\nabla I$ satisfies global Lipschitz condition and has uniformly bounded derivatives up to order 2, $|\nabla I|$ has a positive lower bound and $\big(|\nabla I|^2\big)^{-1}$ has bounded derivative near the invariant submanifold. Then the projection method $\bar{X}$ with the supporting method $\widehat{X}$  has mean-square order p as well.
\end{theorem}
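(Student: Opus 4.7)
The plan is to reduce the theorem to Theorem \ref{t:pp1p2} by comparing the projection method $\bar X$ with the supporting method $\widehat X$ on a single step. Since $\widehat X$ plays the role of the "reference" approximation (it satisfies \eqref{c:p1}, \eqref{c:p2} by hypothesis), it suffices to establish
\begin{equation*}
\big|\E\!\left(\bar X_{t,x}(t+h) - \widehat X_{t,x}(t+h)\right)\!\big| = O(h^{p+1}), \qquad \left(\E\big|\bar X_{t,x}(t+h) - \widehat X_{t,x}(t+h)\big|^{2}\right)^{\!1/2} = O(h^{p+1/2}).
\end{equation*}
Theorem \ref{t:pp1p2}, with $\widehat X$ in the role of $\bar X$ and $\bar X$ in the role of $\widehat X$, then yields the claim.

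To make the projection step explicit, I would write $\bar X = \widehat X + \lambda\,\nabla I(\widehat X)$ with $\lambda\in\R$ chosen so that $I(\bar X)=I(x)$. A Taylor expansion of $I$ at $\widehat X$ reduces this to the scalar equation
\begin{equation*}
\lambda\,|\nabla I(\widehat X)|^{2} + \tfrac{1}{2}\,\lambda^{2}\,\nabla I(\widehat X)^{\top}\nabla^{2}I(\xi)\nabla I(\widehat X) = I(x) - I(\widehat X),
\end{equation*}
which, thanks to $|\nabla I|\geq c_{0}>0$ and boundedness of $\nabla^{2}I$, is uniquely solvable for small right-hand side and obeys $\lambda = (I(x)-I(\widehat X))/|\nabla I(\widehat X)|^{2} + O\big((I(x)-I(\widehat X))^{2}\big)$. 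Using the exact conservation $I(X_{t,x}(t+h))=I(x)$ and a second Taylor expansion of $I$ at $\widehat X$ evaluated at $X_{t,x}(t+h)$ gives $I(x)-I(\widehat X) = -\nabla I(\widehat X)^{\top}\big(\widehat X-X_{t,x}(t+h)\big) + R$ with $|R|\leq\tfrac{1}{2}\|\nabla^{2}I\|_{\infty}|\widehat X-X_{t,x}(t+h)|^{2}$. Combining,
\begin{equation*}
\bar X - \widehat X = -P(\widehat X)\big(\widehat X - X_{t,x}(t+h)\big) + O\big(|\widehat X - X_{t,x}(t+h)|^{2}\big),
\end{equation*}
where $P(y):=\nabla I(y)\nabla I(y)^{\top}/|\nabla I(y)|^{2}$ is the rank-one orthogonal projector onto $\mathrm{span}\,\nabla I(y)$, so $\|P\|_{\mathrm{op}}\leq 1$.

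The mean-square bound is then immediate: $|\bar X-\widehat X|\leq |\widehat X-X_{t,x}(t+h)|+C|\widehat X-X_{t,x}(t+h)|^{2}$, which combined with \eqref{c:p2} and the usual one-step moment bounds yields $(\E|\bar X-\widehat X|^{2})^{1/2}=O(h^{p+1/2})$. The main obstacle is the mean estimate, because a crude Cauchy--Schwarz bound costs $h^{p+1/2}$ instead of the required $h^{p+1}$; the extra half-power must be recovered by replacing the random projector $P(\widehat X)$ with the deterministic $P(x)$. Writing
\begin{equation*}
\E\!\left[P(\widehat X)\big(\widehat X - X_{t,x}(t+h)\big)\right] = P(x)\,\E\!\left[\widehat X - X_{t,x}(t+h)\right] + \E\!\left[\big(P(\widehat X) - P(x)\big)\big(\widehat X - X_{t,x}(t+h)\big)\right],
\end{equation*}
the first summand is $O(h^{p+1})$ directly by \eqref{c:p1}; for the second, the assumptions that $\nabla I$ is Lipschitz with $|\nabla I|$ bounded below and that $(|\nabla I|^{2})^{-1}$ has bounded derivative near $\mathcal M_{x}$ make $P$ Lipschitz, so Cauchy--Schwarz together with the standard moment bound $(\E|X_{t,x}(t+h)-x|^{2})^{1/2}=O(h^{1/2})$ and \eqref{c:p2} gives $O(h^{1/2})\cdot O(h^{p+1/2})=O(h^{p+1})$. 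The quadratic remainder contributes $O(h^{2p+1})\leq O(h^{p+1})$ since $p\geq 1/2$. With both one-step estimates in hand, Theorem \ref{t:pp1p2} finishes the proof.
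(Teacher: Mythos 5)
Your proposal is correct and follows essentially the same route as the paper: solve the scalar quadratic for $\lambda$, use $I(X_{t,x}(t+h))=I(x)$ and Taylor expansions to reduce $\bar X-\widehat X$ to (a projection of) $\widehat X-X_{t,x}(t+h)$, recover the extra half-power in the mean by replacing the $\widehat X$-dependent coefficient with its value at $x$ and applying Cauchy--Schwarz to the remainder, and conclude via Theorem \ref{t:pp1p2}. Your packaging of the leading term as the rank-one projector $P(y)=\nabla I(y)\nabla I(y)^{\top}/|\nabla I(y)|^{2}$ is only a cosmetic difference from the paper's term-by-term estimates of $\E\lambda$ and $\E|\lambda|^{2}$.
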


\begin{proof}
	For simplicity of exposition, we assume that $m=1$ and the supporting method $\widehat{X}$ only has the Wiener increment $\Delta \widehat{W}(h)$. To lighten the notations, we use a generic positive constant $C$ here and throughout this paper, which is independent of $h$ but may be different from line to line. Firstly, we rewrite the one-step projection method in the form
	\begin{equation}
		\bar{X}_{t,x} = \widehat{X}_{t,x} + \lambda\nabla I(\widehat{X}_{t,x}),
	\end{equation}
	where $\bar{X}_{t,x}$ and $\widehat{X}_{t,x}$ start at $x$ in time $t$. In order to simplify notations, the subscripts of them are omitted hereafter.
	
	Define a function $F\big(\lambda, h, \Delta \widehat{W}(h)\big):= I\big(\bar{X}\big) - I(x)$, then we have
	\[F\big(0,0,\Delta \widehat{W}(0)\big) = I(x) - I(x) = 0,\]
	\[\left.\frac{\pa{F}}{\pa{\lambda}} \right|_{\lambda=0, h=0} = \nabla I(x)^\top \nabla I(x) = |\nabla I(x)|^2 \neq 0.\]
	Thus, by the implicit function theorem, there exist a sufficient small $h_0>0$ and a unique function $\lambda(h,\Delta \widehat{W}(h))$, such that $\lambda(0,\Delta \widehat{W}(0))=0$ and  $F\big(\lambda, h, \Delta \widehat{W}(h)\big) = 0$, for $h\in(0,h_0].$
	
	Expanding $F$ with respect to $\lambda$ at 0, we obtain
	\begin{align*}
		0 &= F\big(0,h,\Delta \widehat{W}(h)\big) + \left.\frac{\pa{F}}{\pa{\lambda}}\right|_{\lambda=0} \lambda + \frac{1}{2} \left.\frac{\pa^2{F}}{\pa{\lambda^2}}\right|_{\lambda=\lambda_1} \lambda^2 \\
		&= I(\widehat{X}) - I(x) + \nabla I(\widehat{X})^\top \nabla I(\widehat{X})\lambda + \frac{1}{2} \nabla I(\widehat{X})^\top \nabla^2 I(\theta)\nabla I(\widehat{X}) \lambda^2,
	\end{align*}
	where $\lambda_1\in [0,\lambda]$, $\theta = \widehat{X}+\nabla I(\widehat{X})\lambda_1$, and $\nabla^2 I(\cdot)$ denotes the Hessian matrix of $I(\cdot)$.
	
	Since $\nabla I(\widehat{X})^\top \nabla I(\widehat{X}) = |I(\widehat{X})|^2 \neq 0$ by assumption, we have
	\begin{equation}\label{e:lambda}
		\begin{aligned}
			\lambda &= -\Big(\nabla I(\widehat{X})^\top \nabla I(\widehat{X})\Big)^{-1}  \Big(I(\widehat{X}) - I(x)\Big) \\
			& \quad {}-\frac{1}{2}\Big(\nabla I(\widehat{X})^\top \nabla I(\widehat{X})\Big)^{-1} \nabla I(\widehat{X})^\top \nabla^2 I(\theta)\nabla I(\widehat{X}) \lambda^2.
		\end{aligned}
	\end{equation}
	
	In addition, suppose that $X := X_{t,x}(t+h)$ is the solution of the SDE with initial value $x$, we thus have $I(X) = I(x)$, and
	\begin{equation}\label{e:I(hat)-I(x)}
		\begin{aligned}
			&\phantom{==} I(\widehat{X}) - I(x) \\
			&= I(\widehat{X}) - I(X) \\
			&= \nabla I(X)^\top(\widehat{X}-X) + (\widehat{X}-X)^\top\nabla^2 I(\theta_1)(\widehat{X}-X)\\
			&= \nabla I(x)^\top(\widehat{X}-X) + (\widehat{X}-X)^\top\nabla^2 I(\theta_1)(\widehat{X}-X) + (X-x)^\top\nabla^2 I(\theta_2)(\widehat{X}-X)
		\end{aligned}
	\end{equation}
	where $\theta_1$ is on the segment connecting $X$ and  $\widehat{X}$, and $\theta_2$ is on the segment connecting $x$ and $X$.
	
	Substituting $I(\widehat{X})-I(x)$ into \eqref{e:lambda} and using the boundedness assumptions in the theorem, we get
	\begin{equation}\label{e:lambda2}
		\begin{aligned}
			|\lambda|^2 &\leq C |\widehat{X}-X|^2 + C |\lambda|^4\\
			&\leq C |\widehat{X}-X|^2 + \frac{1}{2} |\lambda|^2.
		\end{aligned}
	\end{equation}
	The second inequality in \eqref{e:lambda2} is due to the fact
	that $\lambda\rightarrow 0$ as $h\rightarrow 0$. So when $h$ is sufficiently small, $C |\lambda|^4\leq \frac{1}{2} |\lambda|^2$ holds. Then
	\begin{equation}\label{e:E|lambda^2|}
		\begin{aligned}
			\E|\lambda|^2 &\leq C \E|\widehat{X}-X|^2 = O(h^{2p+1}),
		\end{aligned}
	\end{equation}
	because we assume that $\widehat{X}$ satisfies \eqref{c:p2} in Theorem \ref{t:p1p2} with $p$.
	Furthermore, to get the desired result, we must estimate $|\E(\lambda)|$. We continue to expand $\lambda$
	\begin{equation}
		\lambda = -\Big(\nabla I(x)^\top \nabla I(x)\Big)^{-1} \Big(I(\widehat{X})-I(x)\Big) - R_1,
	\end{equation}
	with 
	\begin{align*}
		R_1 &= \Big(\big(\nabla I(\theta_3)^\top \nabla I(\theta_3)\big)^{-1}\Big)'(\widehat{X}-X) \Big(I(\widehat{X})-I(x)\Big) \\
		& \quad + \frac{1}{2}\Big(\nabla I(\widehat{X})^\top \nabla I(\widehat{X})\Big)^{-1} \nabla I(\widehat{X})^\top \nabla^2 I(\theta)\nabla I(\widehat{X}) \lambda^2,
	\end{align*}
	where $\theta_3$ is on the segment connecting $x$ and  $\widehat{X}$. Therefore,
	\begin{equation}\label{e:E|lambda|}
		|\E(\lambda)| \leq |(\nabla I(x)^\top \nabla I(x))^{-1}|\cdot |\E(I(\widehat{X})-I(x))| + |\E(R_1)|.
	\end{equation}
	
	Note that, since $\E(\widehat{X} - X)=O(h^{p+1})$, $\E(\widehat{X} - X)^2 = O(h^{2p+1})$ and $\E(X - x)^2=O(h)$, this yelds
	\begin{equation}\label{e:E(I-I)}
		\begin{aligned}
			\E\big(I(\widehat{X}) - I(x)\big) &= \nabla I(x)^\top \E(\widehat{X}-X) + \E\Big((\widehat{X}-X)^\top\nabla^2 I(\theta_1)(\widehat{X}-X)\Big) \\
			&\quad + \E\Big((X-x)^\top\nabla^2 I(\theta_2)(\widehat{X}-X)\Big)\\
			&\leq C \E(\widehat{X}-X) + C \E(\widehat{X}-X)^2 + C \Big(\E(X-x)^2\Big)^{1/2} \Big(\E(\widehat{X}-X)^2\Big)^{1/2}\\
			& = O(h^{p+1})
		\end{aligned}
	\end{equation}
	and
	\begin{equation}\label{e:ER1}
		\E(R_1)\leq C \Big(\E(\widehat{X}-X)^2\Big)^{1/2} \Big(\E\big(I(\widehat{X})-I(x)\big)^2\Big)^{1/2} + C \E(\lambda^2) = O(h^{2p+1})
	\end{equation}
	by the Cauchy-Schwartz inequality.
	Substituting \eqref{e:E(I-I)} and \eqref{e:ER1} into \eqref{e:E|lambda|}, we deduce that
	\begin{equation}\label{e:|E(lambda)|}
	\vert \E(\lambda)\rvert = O(h^{p+1}).
	\end{equation}
	Now we can compare $\bar{X}$ and $\widehat{X}$ by the use of \eqref{e:E|lambda^2|} and \eqref{e:E|lambda|}. Since
	\begin{align*}
		\bar{X}-\widehat{X} &= \nabla I(\widehat{X}) \lambda = \nabla I(x) \lambda + \nabla^2 I(\theta_4) (\widehat{X}-x) \lambda,
	\end{align*}
	where $\theta_4$ is on the segment connecting $\widehat{X}$ and  $x$,
	we have 
	\begin{align*}
		\E|\bar{X}-\widehat{X}|^2 &\leq 2 |\nabla I(x)|^2 \E|\lambda|^2 + 2|\nabla^2 I(\theta_4)| \E|(\widehat{X}-x)\lambda|^2\\
		&\leq C \E|\lambda|^2 + C \E|\widehat{X}-x|^2 \E|\lambda|^2\\
		&= O(h^{2p+1})
	\end{align*}
	and
	\begin{align*}
		|\E(\bar{X}-\widehat{X})| &\leq |\nabla I (x)| \cdot|\E(\lambda)| + \E|\nabla^2 I(\theta_4) (\widehat{X}-x)\lambda|\\
		&\leq C |\E(\lambda)| + C (\E|\widehat{X}-x|^2)^{\frac{1}{2}} (\E|\lambda|^2)^{\frac{1}{2}}\\
		& = O(h^{p+1}).
	\end{align*}
	That is to say, $|\E(\bar{X}-\widehat{X})|=O(h^{p+1})$ and $(\E|\bar{X}-\widehat{X}|^2)^{\frac{1}{2}} = O(h^{p+\frac{1}{2}})$. Finally, we attain that the projection method $\bar{X}$ has mean-square order $p$ just as the supporting method $\widehat{X}$ by applying Theorem \ref{t:pp1p2}. The proof is therefore completed.\qed
\end{proof}

\begin{remark}
	We can also obtain the same result by choosing $\nabla I(x)$ instead of $\nabla I(\widehat{X})$ as the projection direction.
\end{remark}


Here we consider some common one-step supporting methods which can be applied to the projection methods:

1. The Euler-Maruyama method of mean-square order 0.5 \cite{kloeden1992numerical}
\begin{equation}\label{m:euler}
\widehat{X} = x + h\left(f(x) + \frac{1}{2}\sum_{r=1}^{m}\frac{\pa g_r}{\pa x} g_r(x)\right) + \sum_{r=1}^m g_r(x)\Delta\widehat{ W}_r(h),
\end{equation}
which is gained by converting \eqref{e:sde} into the equivalent It\^{o} sense first.

2. The Milstein method of mean-square order 1 with commutative noises ($\Lambda_i g_r = \Lambda_r g_i$) \cite{milstein2004stochastic}
\begin{equation}\label{m:mil}
		\begin{aligned}
			\widehat{X} &= x + hf(x) + \sum_{r=1}^m g_r(x)\Delta \widehat{W}_r + \sum_{i=1}^{m-1}\sum_{r=i+1}^{m}\Lambda	a_i g_r(x) \Delta \widehat{W}_i(h) \Delta \widehat{W}_r(h) +\frac{1}{2}\sum_{r=1}^m \Lambda_r g_r(x)\big(\Delta \widehat{W}_r(h)\big)^2,
			\end{aligned}
\end{equation}
where the operator $\Lambda_i g_r(x):= \Big(g_r(x)\Big)'g_i(x)$.

3. The mid-point method of mean-square order 1 with commutative noises \cite{hong2015preservation,milstein2002numerical}
\begin{equation}\label{m:mid}
	\widehat{X} = x + f(\frac{x+\widehat{X}}{2})h + \sum_{r=1}^m g_r(\frac{x+\widehat{X}}{2}) \Delta \widehat{W}_r(h).
\end{equation}

They all can be easily applied as the supporting methods since they are just in need of simulating the Wiener increments $\Delta \widehat{W}_r$ $(r=1,\dots,m)$ at each step.

Notice that the mid-point method \eqref{m:mid} is implicit, and is a symplectic method for stochastic Hamiltonian systems which automatically preserves quadratic conversed quantity\cite{hong2015preservation}. Now applying the mid-point method \eqref{m:mid} as the supporting method in the projection method, and using the property of a kind of discrete gradient $\bar{\nabla} I(x,\bar{X})$:
\begin{equation}\label{dg_property}
\bar{\nabla}I(x,\bar{X})^\top \cdot (\bar{X}-x) = I(\bar{X})-I(x),
\end{equation}
we can obtain that
\begin{equation}\label{m:dg}
	\bar{X} = x + h \bar{S} \bar{\nabla}I(x,\bar{X})  + \sum_{r=1}^m \Delta\widehat{W}_r(h) \bar{T}_r \bar{\nabla}I(x,\bar{X}),
\end{equation}
where $\bar{S}$ and $\bar{T_r}$ are both $d \times d$ skew-symmetric matrices defined by
\begin{equation}
	\begin{aligned}
		\bar{S} &= \frac{f(\frac{x+\widehat{X}}{2})\nabla I(\widehat{X})^\top - \nabla I(\widehat{X}) f(\frac{x+\widehat{X}}{2})^\top}{\nabla I(\widehat{X})^\top \bar{\nabla}I(x,\bar{X})},\\
		\bar{T}_r &= \frac{g_r(\frac{x+\widehat{X}}{2})\nabla I(\widehat{X})^\top - \nabla I(\widehat{X}) g_r(\frac{x+\widehat{X}}{2})^\top}{\nabla I(\widehat{X})^\top \bar{\nabla}I(x,\bar{X})},\, r=1,\dots,m.
	\end{aligned}
\end{equation}
From this point of view, we can regard method \eqref{m:dg} as another form of the discrete gradient method which is an extension in the case of stochastic systems \cite{norton2013projection}.

\section{Multiple conserved quantities}\label{s:multi}

In this section, we consider projection methods for system \eqref{e:sde} with multiple conserved quantities. Below is the definition of the system with multiple conserved quantities.
\begin{definition}\label{d:multiple}
	System \eqref{e:sde} possesses $l$  independent conserved quantities $I^i(x)$, $i=1,\dots ,l$, if
	\begin{equation}
		\big(\nabla I^i(x)\big)^\top f(x) = 0 \quad\text{and}\quad\big(\nabla I^i(x)\big)^\top g_r(x) = 0,
		\quad r=1,\dots,m; \quad i=1,\dots,l.
	\end{equation}
\end{definition}

In other words, if we define vector $\mathbf{I}(x) := \big(I^1(x),\dots,I^l(x)\big)^\top$, then 
\[\mathbf{I}'(x)f(x)=\mathbf{I}'(x)g_r(x)=\mathbf{0}, \quad r=1,\dots,m,\]
where $\mathbf{I}'(x)$ is the Jacobian matrix of $\mathbf{I}(x)$. So this definition is consistent with Definition \ref{d:i}.

If system \eqref{e:sde} possesses $l$ conserved quantities $I^i(x)$, $i=1,\dots ,l$, then by \eqref{e:dI} we have
\begin{equation*}
	X(t)\in \mathcal{M}_{X_0}:=\Big\{x\in\R^d \mid I^i(x)=I^i(X_0),
	i=1,\dots,l\Big\}, \quad t\in[0,T], \quad \text{a.s.},
\end{equation*}
which implies that the solution $X(t)$ of this system will be confined to the invariant submanifold $\mathcal{M}_{X_0}$ generated by $I^i(x)$, $i=1,\dots,l$.

For system \eqref{e:sde} with multiple conserved quantities, it is difficult to construct numerical methods approximating the exact solution while preserving these quantities simultaneously. To overcome this difficulty, we also couple an common supporting method with a projection step to construct the numerical method in stochastic case, then the resulted numerical approximation will stay at the proper invariant submanifold as the exact solution in every sample path.

Suppose that we have a supporting one-step method $\widehat{X}_{t,x}$, the projection method here shares similarities with the one in Sect. \ref{s:single}:
\begin{enumerate}
	\item Compute the one-step approximation $\widehat{X}_{t,x}$.
	\item Compute $\mathbf{\lambda}\in \R^l$ for $\bar{X}_{x,t} = \widehat{X}_{t,x} + \Phi\lambda$,\, s.t.\, $\mathbf{I}(\bar{X}_{x,t}) = \mathbf{I}(x)$.
\end{enumerate}
Here the matrix $\Phi \in \R^{d\times l}$ defines the direction of the projection, and $\mathbf{\lambda}$ is a $l$-dimensional vector chosen such that $\bar{X}_{t,x}$ belongs to the invariant manifold $\mathcal{M}_{X_0}$. In fact $\Phi$ is not unique, and here we choose  $\Phi=\bigl(\mathbf{I}'(\widehat{X}_{t,x})\bigr)^\top$, which is transpose of the Jacobian matrix of $\mathbf{I}(\cdot)$ at $\widehat{X}_{x,t}$.

Considering the mean-square order of convergence of the projection methods for system with multiple conserved quantities leads to the following theorem, which is the main result of this section.

\begin{theorem}
	Suppose that system \eqref{e:sde} possesses $l$ independent conserved quantities $I^i(x),i=1,\dots,l$.
	Also assume that a supporting method $\widehat{X}$ applying to \eqref{e:sde} satisfies \eqref{c:p1} and \eqref{c:p2} in Theorem \ref{t:p1p2} with $p$ and has mean-square order $p$, and the matrix functions $S^i,T_r^i \in C_b^2(\R^d,\R^{d\times d})$, $r=1,\dots,m$, in the equivalent SG forms \eqref{e:SG}. Assume that $\nabla I^i$ satisfy global Lipschitz condition and have uniformly bounded derivatives up to order 2, $|\nabla I^i|$ have a positive lower bound and $\big(|\nabla I^i|^2\big)^{-1}$ has bounded derivative near the invariant manifold. Then the projection method $\bar{X}$ using the supporting mehtod $\widehat{X}$ also has mean-square order $p$.
\end{theorem}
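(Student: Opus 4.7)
The plan is to mimic the argument of Theorem~\ref{t:single}, replacing the scalar Lagrange multiplier $\lambda \in \R$ by the vector $\lambda \in \R^l$ and the single gradient $\nabla I$ by the Jacobian $\mathbf{I}'(\widehat{X}) \in \R^{l\times d}$. First I would write the projection step as $\bar{X} = \widehat{X} + \mathbf{I}'(\widehat{X})^\top \lambda$ and set $F(\lambda,h,\Delta\widehat{W}(h)) := \mathbf{I}(\bar{X}) - \mathbf{I}(x) \in \R^l$. Then $F(0,0,\Delta\widehat{W}(0)) = 0$ and
\begin{equation*}
\left.\frac{\partial F}{\partial \lambda}\right|_{\lambda = 0,\,h = 0} = \mathbf{I}'(x)\,\mathbf{I}'(x)^\top,
\end{equation*}
which is an $l\times l$ Gram matrix. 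By the independence of the conserved quantities and the hypothesis on $\mathbf{I}'$ near the manifold, this Gram matrix is invertible, so the implicit function theorem yields a unique smooth $\lambda(h,\Delta\widehat{W}(h))$ with $\lambda(0,\Delta\widehat{W}(0)) = 0$.

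Next, I would Taylor-expand $F$ in $\lambda$ at $0$ to second order,
\begin{equation*}
0 = \mathbf{I}(\widehat{X}) - \mathbf{I}(x) + \mathbf{I}'(\widehat{X})\,\mathbf{I}'(\widehat{X})^\top \lambda + R_2(\lambda),
\end{equation*}
where $R_2(\lambda)$ collects bilinear terms in $\lambda$ with coefficients built from the Hessians of the individual $I^i$'s (and hence uniformly bounded by hypothesis). Solving for $\lambda$ exactly as in the scalar case produces
\begin{equation*}
\lambda = -\bigl(\mathbf{I}'(\widehat{X})\,\mathbf{I}'(\widehat{X})^\top\bigr)^{-1}\bigl(\mathbf{I}(\widehat{X}) - \mathbf{I}(x)\bigr) - R,
\end{equation*}
with $R$ quadratic in $\lambda$. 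Since $\mathbf{I}(X) = \mathbf{I}(x)$ componentwise for the exact flow $X = X_{t,x}(t+h)$, I can reproduce the Taylor expansion \eqref{e:I(hat)-I(x)} for each $I^i$, and combine it with the assumption that $\widehat{X}$ satisfies \eqref{c:p1}--\eqref{c:p2} and with $\E|X - x|^2 = O(h)$. The scalar bootstrap argument that absorbs the $|\lambda|^4$ term into $\tfrac{1}{2}|\lambda|^2$ for small $h$ carries over verbatim, giving $\E|\lambda|^2 = O(h^{2p+1})$ and, after a further expansion of the inverse Gram factor around $x$ and Cauchy--Schwartz on the cross terms, $|\E \lambda| = O(h^{p+1})$.

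The last step is to translate the bounds on $\lambda$ into bounds on $\bar{X} - \widehat{X}$. Writing
\begin{equation*}
\bar{X} - \widehat{X} = \mathbf{I}'(\widehat{X})^\top \lambda = \mathbf{I}'(x)^\top \lambda + \bigl(\mathbf{I}'(\widehat{X}) - \mathbf{I}'(x)\bigr)^\top \lambda
\end{equation*}
and using the Lipschitz bound on each $\nabla I^i$ together with $\E|\widehat{X} - x|^2 = O(h)$, I obtain $|\E(\bar{X}-\widehat{X})| = O(h^{p+1})$ and $(\E|\bar{X}-\widehat{X}|^2)^{1/2} = O(h^{p+1/2})$. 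Theorem~\ref{t:pp1p2} then transfers the mean-square order $p$ from $\widehat{X}$ to $\bar{X}$.

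The main obstacle is controlling $\bigl(\mathbf{I}'(\widehat{X})\mathbf{I}'(\widehat{X})^\top\bigr)^{-1}$ uniformly as $\widehat{X}$ varies near the manifold: in the scalar case the single positive lower bound on $|\nabla I|$ suffices, whereas here the smallest singular value of $\mathbf{I}'$ must be bounded away from $0$ and the entries of the inverse must have bounded first derivatives. This is the genuine vector analogue of the scalar hypothesis, and it is exactly what the theorem's assumptions on the $I^i$'s together with their independence provide. Once that uniform bound is in place, every other estimate is a direct matrix-vector analogue of those in Theorem~\ref{t:single}.
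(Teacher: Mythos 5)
Your proposal is correct and is exactly the argument the paper intends: the paper omits this proof, stating only that it is similar to that of Theorem~\ref{t:single}, and your vectorization --- replacing $\lambda\in\R$ by $\lambda\in\R^l$, the direction $\nabla I(\widehat{X})$ by $\mathbf{I}'(\widehat{X})^\top$, and the scalar $|\nabla I|^2$ by the Gram matrix $\mathbf{I}'(\widehat{X})\mathbf{I}'(\widehat{X})^\top$, then repeating the expansion, bootstrap, and appeal to Theorem~\ref{t:pp1p2} --- is the faithful way to carry that out. Your closing observation, that what is really needed is a uniform lower bound on the smallest singular value of $\mathbf{I}'$ near the manifold rather than merely a lower bound on each $|\nabla I^i|$, is an accurate identification of what the ``independence'' hypothesis must supply and is the only point where the componentwise assumptions as literally stated need strengthening.
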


The proof of this theorem is similar to that of Theorem \ref{t:single}. In order to avoid repetition, we omit it here.

To realize the projection method above, we have to solve a $l$-dimensional nonlinear system, which can be implemented by some iterative algorithms such as the Newton method as well. In addition, if the Jacobian matrix $\mathbf{I}'(x)$ is unavailable or too expensive to compute at every iteration, some kinds of Quasi-Newton methods can be used \cite{dennis1977quasi}.
\section{Numerical experiments}\label{s:num}
In this section, three typical examples are given to show the conserved quantities along numerical solutions and mean-square convergence order for our proposed projection methods in Sect. \ref{s:single} and \ref{s:multi}. Here we consider the following systems:
\begin{equation}\label{e:f=cg}
	dX(t) = f\big(X(t)\big) \left(dt + \sum_{r=1}^{m} c_r \circ dW_r(t)\right)
\end{equation}
with real-valued constants $c_r$, $r=1,\cdots m$, representing the intensity of each noise, then we select the following general popular methods as the supporting ones embedded in the projection methods:
\begin{enumerate}
	\item Euler method.
	\item Milstein method.
	\item Mid-point method (Mid).
	\item Order 1.5 strong Taylor method (T3/2).
	\item Order 2 strong Taylor method (T2).
\end{enumerate}

Details of the last two methods with some more terms according to stochastic Taylor expansion can be found in \cite{kloeden1992numerical,milstein2004stochastic}.
The corresponding projection methods are denoted by EulerP, MilsteinP, MidP, T3/2P, and T2P for short respectively.
In addition, it is quite convenient to perform them  for  \eqref{e:f=cg} because they are just in need  of modeling the Wiener increments $\Delta \widehat{W}_r(h)$ at each time step.

\subsection{Example 1. Kubo Oscillator}
First of all, we consider a stochastic harmonic oscillator with one multiplicative noise in the sense of Stratonovich defined by
\begin{equation}\label{kubo}
	\left\{
	\begin{aligned}
		dX_1(t) &= -a X_2(t) dt - \sigma X_2(t) \circ dW(t),\\
		dX_2(t) &= \phantom{-} a X_1(t) dt + \sigma X_1(t) \circ dW(t),
	\end{aligned}\right.
\end{equation}
where $a$ and $\sigma$ are real-valued parameters.
This system is also known as the Kubo oscillator and appears in nuclear magnetic resonance, the theory of optical line shapes, and also molecular spectroscopy \cite{cohen2012numerical}. Moreover, system \eqref{kubo} is a stochastic Hamiltonian system \eqref{e:chamil} with Hamiltonian function $I(x,y) = \frac{1}{2}(x^2 +y^2)$ as its conserved quantity \cite{milstein2002numerical}.

To test the convergence order of the projection methods, we experiment here the methods mentioned above as corresponding supporting methods in Sect. \ref{s:single}. Let $a=\sigma=1$, $\Big(X_1(0), X_2(0)\Big)=(1,0)$ be the parameters and initial value in \eqref{kubo} respectively.

\begin{table}[]	
	\caption{Mean-square errors of Kubo oscillator}
	\label{table:kubo}
	\begin{tabular}{@{}lccccccr@{}}
		\toprule
		        $h$         & $2^{-3}$ & $2^{-4}$ & $2^{-5}$ & $2^{-6}$ & $2^{-7}$ & $2^{-8}$ & Order \\ \midrule
		       Euler        & 3.11E-01 & 1.92E-01 & 1.26E-01 & 8.68E-02 & 6.31E-02 & 4.49E-02 & 0.55  \\ \midrule
		      EulerP        & 1.52E-01 & 6.99E-02 & 3.98E-02 & 1.85E-02 & 8.79E-03 & 4.66E-03 & 1.01  \\ \midrule
		     Milstein       & 2.20E-01 & 1.01E-01 & 4.98E-02 & 2.41E-02 & 1.18E-02 & 5.84E-03 & 1.04  \\ \midrule
		MilsteinP\quad\quad & 1.01E-01 & 4.77E-02 & 2.46E-02 & 1.19E-02 & 5.79E-03 & 3.09E-03 & 1.01  \\ \midrule
		        Mid         & 5.27E-02 & 2.50E-02 & 1.28E-02 & 6.31E-03 & 2.99E-03 & 1.56E-03 & 1.02  \\ \midrule
		       T3/2         & 3.85E-02 & 1.29E-02 & 4.40E-03 & 1.48E-03 & 5.78E-04 & 2.03E-04 & 1.51  \\ \midrule
		       T3/2P        & 3.36E-02 & 1.12E-02 & 3.66E-03 & 1.33E-03 & 5.22E-04 & 1.83E-04 & 1.50  \\ \midrule
		        T2          & 1.59E-02 & 3.73E-03 & 9.39E-04 & 2.29E-04 & 5.70E-05 & 1.49E-05 & 2.01  \\ \midrule
		        T2P         & 1.38E-02 & 3.08E-03 & 7.30E-04 & 1.73E-04 & 4.43E-05 & 1.13E-05 & 2.05  \\ \bottomrule
	\end{tabular}
\end{table}

\begin{figure}[htbp]
	\centering
	\includegraphics[width=0.7\textwidth]{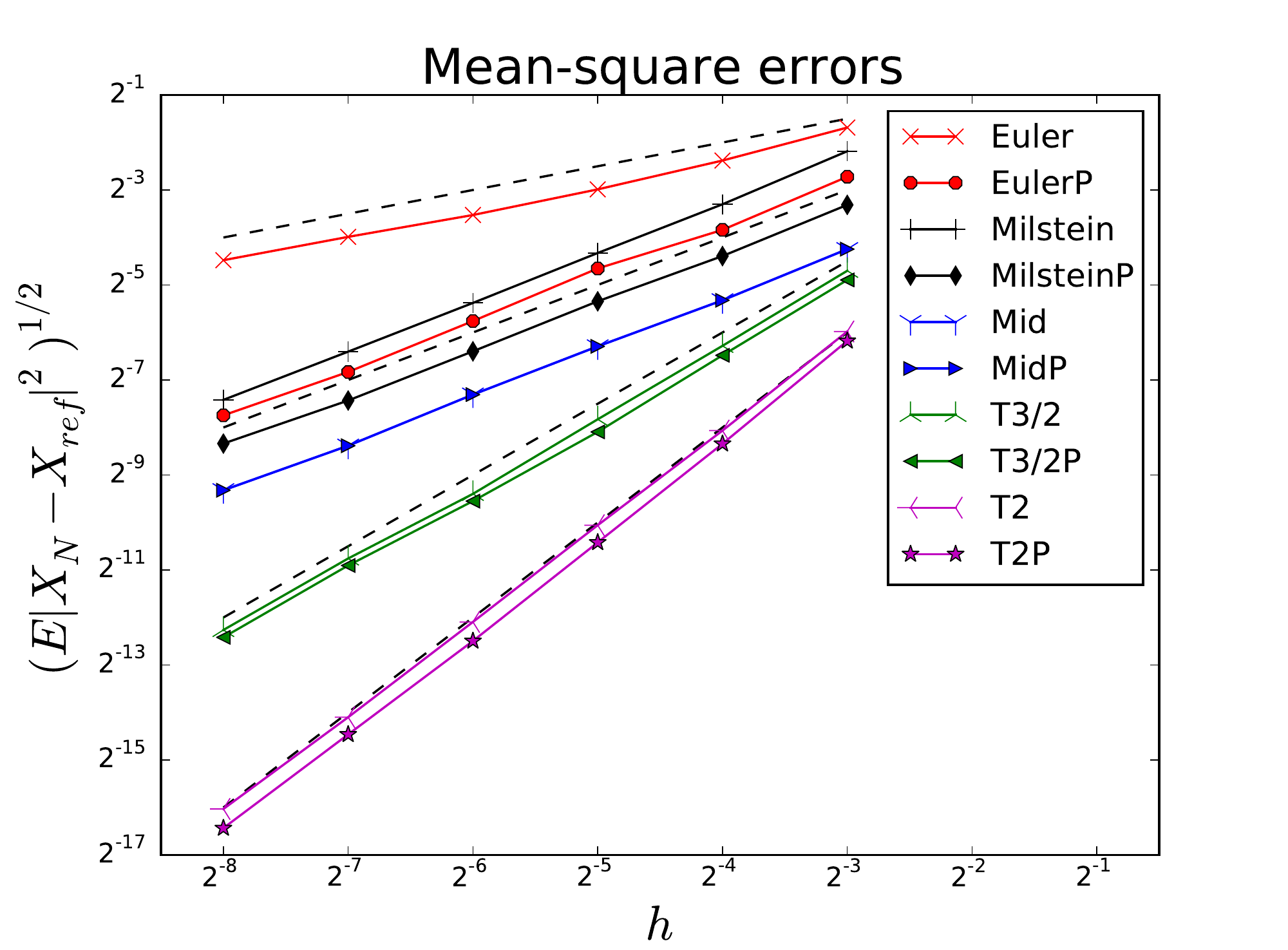}
	\caption{Mean-square errors at $T=1$ of these methods for Kubo oscillator with $a = 1$, $\sigma = 1$, $(X_1(0) , X_2(0))=(1 , 0)$. The dashed reference lines have slopes 0.5, 1, 1.5 and 2 respectively.}\label{f:kubo_order}
\end{figure}

Table \ref{table:kubo} displays the mean-square errors at the final step $T=1$ of these methods, where the last column lists the convergence order calculated by method of linear least square fitting \cite{higham2001algorithmic}. Also, they are represented in Fig. \ref{f:kubo_order} graphically. Here, the reference solution is computed using the T2 method and step-size $h_{\mathrm{ref}}=2^{-14}$ and the expectation is realized using the average of $M=10000$ independent sample paths. Because system \eqref{kubo} is linear and has structure \eqref{e:f=cg}, the methods we consider here just need Wiener increments $\Delta \widehat{W}(h)$ at each step. Note that these methods have the mean-square orders corresponding to the result of Sect. \ref{s:single}. Although the Euler method has mean-square order 0.5, it seems to increase to order 1 after projection. However, this result can be explained from another aspect as below.

Rewrite \eqref{kubo} to 
\begin{equation}\label{e:kubo_dg}
	dX = S\nabla I(X)dt + T\nabla I(X) \circ dW,
\end{equation}
where constant matrices $S$ and $T$ are 
\begin{equation}\label{e:ST}
	\begin{aligned}
		S &= \frac{f(x)\nabla I(x)^\top-\nabla I(x)f(x)^\top}{\nabla I(x)^\top \nabla I(x)}=
		\begin{pmatrix}
			0 & \quad -a\\
			a & \quad 0
		\end{pmatrix},\\
		T &= \frac{g(x)\nabla I(x)^\top-\nabla I(x)g(x)^\top}{\nabla I(x)^\top \nabla I(x)}=
		\begin{pmatrix}
			0  & \quad -\sigma\\
			\sigma & \quad 0
		\end{pmatrix}.
	\end{aligned}
\end{equation}

Applying the discrete gradient method to \eqref{e:kubo_dg}, we get
\begin{equation}\label{e:dg_kubo}
	\bar{X} = x + h S \bar{\nabla}I(x,X) + \Delta\widehat{W}(h) T \bar{\nabla}I(x,X)
\end{equation}
where $\bar{\nabla}I(x,X)$ is some  kind of discrete gradient. This method has mean-square order 1, and for details refer to \cite{chen2014conservative,cohen2014energy,hong2011discrete}.

Next we consider the projection method with Euler-Maruyama method $\widehat{X}$ as its supporting method and choose $\nabla I(x)$ as the projection direction:
\begin{equation}\label{e:euler_p}
	\bar{X} = x + h\bar{f}(x) + \Delta\widehat{W}(h)g(x) +\lambda \nabla I(x), \quad \,\text{s.t.}\quad I(\bar{X})=I(x),
\end{equation}
where $\bar{f}=f+\frac{1}{2}g'g$ is the drift term in the It\^{o} sense.
Taking inner product on both sides of \eqref{e:euler_p} with  $\bar{\nabla}I(x,\bar{X})$, by  \eqref{dg_property} we get 
\begin{align*}
	0 	&= I(\bar{X})-I(x)=\bar{\nabla}I(x,\bar{X})^\top \cdot (\bar{X}-x)\\
	&=h\bar{\nabla}I(x,\bar{X})^\top \cdot \bar{f}(x)+ \Delta\widehat{W}(h)\bar{\nabla}I(x,\bar{X})^\top\cdot g(x) + \lambda  \bar{\nabla}I(x,\bar{X})^\top \cdot \nabla I(x).
\end{align*}
Therefore,
\begin{equation}\label{e:lam}
	\lambda = - h\frac{\bar{\nabla}I(x,\bar{X})^\top\cdot\bar{f}(x) }{ \bar{\nabla}I(x,\bar{X})^\top\cdot \nabla I(x)} - \Delta\widehat{W}(h)\frac{\bar{\nabla}I(x,\bar{X})\cdot g(x)}{ \bar{\nabla}I(x,\bar{X})^\top\cdot \nabla I(x)}.
\end{equation}
Substituting \eqref{e:lam} into \eqref{e:euler_p} leads to
\begin{equation}\label{e:euler_pdg}
	\bar{X} = x + h\bar{S}\bar{\nabla}I(x,\bar{X}) + \Delta\widehat{W}(h)\bar{T}\bar{\nabla}I(x,\bar{X}),
\end{equation}
where 
\begin{equation}\label{e:STbar}
	\begin{aligned}
		\bar{S} &= \frac{\bar{f}(x)\nabla I(x)^\top - \nabla I(x)\bar{f}(x)^\top}{ \bar{\nabla}I(x,\bar{X})^\top\cdot \nabla I(x)},\\
		\bar{T} &= \frac{g(x)\nabla I(x)^\top - \nabla I(x)g(x)^\top}{\bar{\nabla}I(x,\bar{X})^\top\cdot \nabla I(x)}.
	\end{aligned}
\end{equation}
Besides,
\begin{equation*}
	g(x)'g(x) =
	\begin{pmatrix}
		0 & -\sigma\\
		\sigma &  0
		\end{pmatrix}
	\begin{pmatrix}
		-\sigma x_2\\
		\phantom{-}\sigma x_1
	\end{pmatrix}
	= \sigma^2 x = \sigma^2 \nabla I(x).
\end{equation*}
therefore
\begin{equation*}
	\big(g(x)'g(x)\big)\cdot \nabla I(x)^ T- \nabla I(x) \cdot \big(g(x)'g(x)\big)^\top = 0,
\end{equation*}
and
\begin{equation*}
	f(x)\nabla I(x)^\top-\nabla I(x)f(x)^\top = \bar{f}(x)\nabla I(x)^\top-\nabla I(x)\bar{f}(x)^\top.
\end{equation*}
so the matrix $S$ in \eqref{e:ST} has another form
\begin{equation}\label{e:SS}
	S = \frac{\bar{f}(x)\nabla I(x)^\top-\nabla I(x)\bar{f}(x)^\top}{\nabla I(x)^\top \nabla I(x)}.
\end{equation}
Hence, $\bar{S}$ and $\bar{T}$ in \eqref{e:euler_pdg} are just the approximation of $S$ and $T$ by comparing \eqref{e:ST}, \eqref{e:SS} and \eqref{e:STbar}. Applying Theorem \ref{t:pp1p2} in methods \eqref{e:dg_kubo} and \eqref{e:euler_pdg}, we can conclude that the EulerP method \eqref{e:euler_p} is of mean-square order 1 as well.

\begin{figure}[htb]
	\centering
	\includegraphics[width=0.8\textwidth]{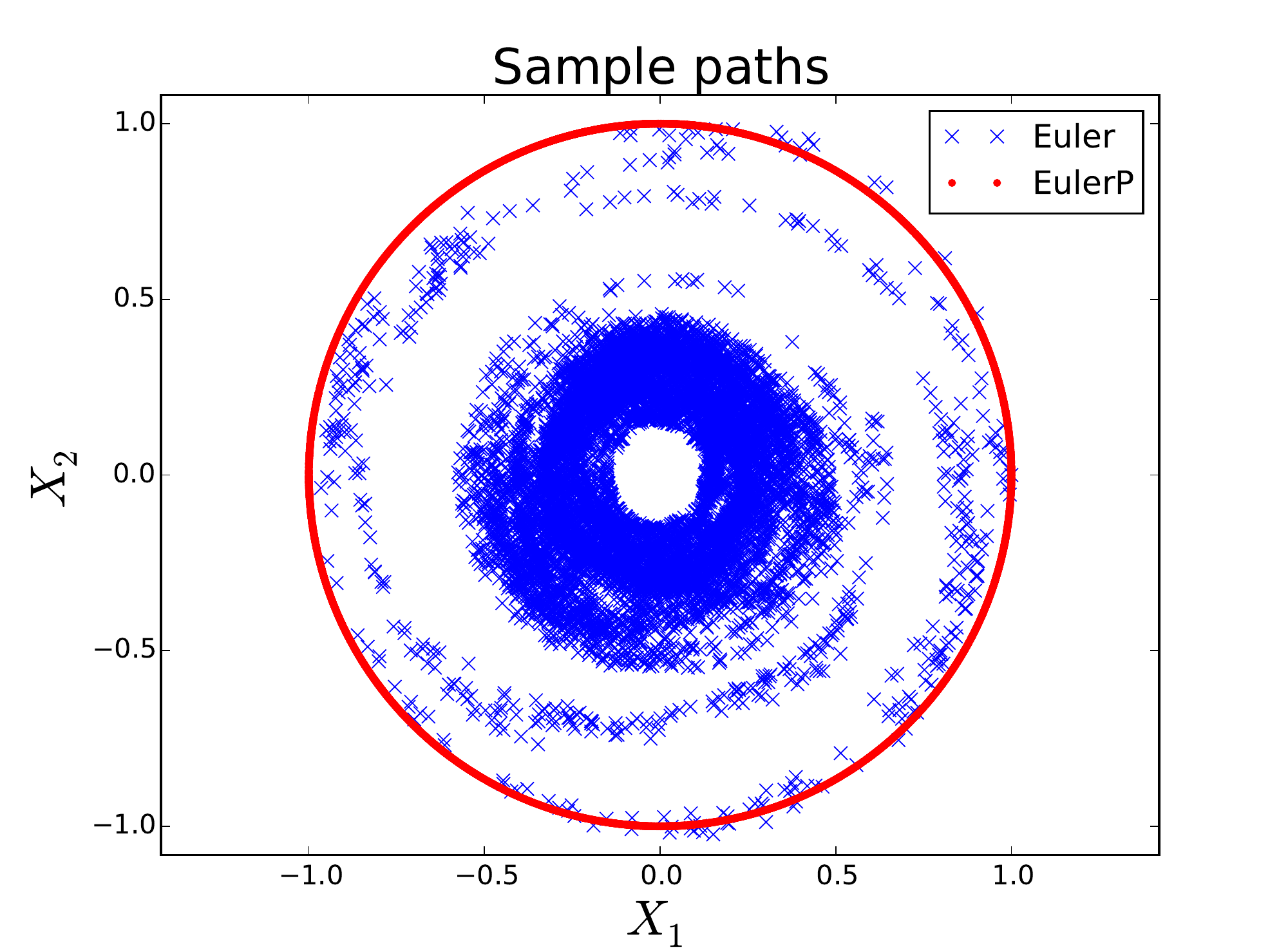}
	\caption{Numerical sample paths of Kubo oscillator produced by Euler and EulerP methods.}\label{f:kubo_path}
\end{figure}

Fig. \ref{f:kubo_path} displays two numerical sample paths of Euler method and EulerP method with $t_n\in[0,200]$ and time step-size $h=0.02$. We can observe that the numerical solution of EulerP method remains on the initial unit circle as we expect, but the normal Euler method does not share this property. The other supporting methods and their projection ones are similar except the mid-point method. Note that the midpoint method already preserve quadratic conserved quantity \cite{hong2015preservation}, so the projection step is not necessary. Errors in conserved quantity $I(x,y)=\frac{1}{2}(x^2+y^2)$ of Euler method and EulerP method are shown in Fig. \ref{f:kubo_CQ}, from which we find that the projection one preserve the conserved quantity up to the tolerance in the Newton iteration.

\begin{figure}[htbp]
	\centering
	\includegraphics[width=0.45\textwidth]{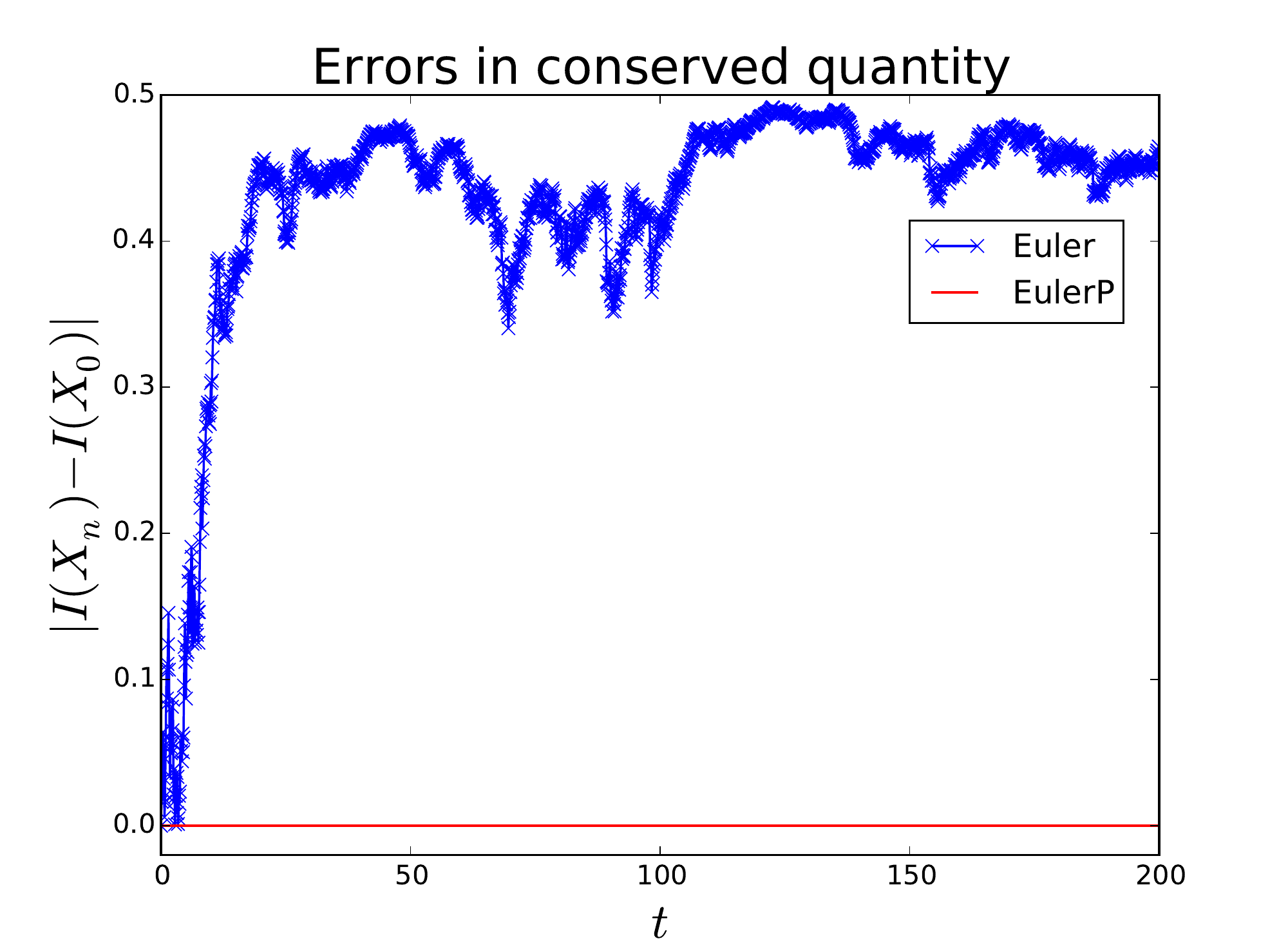}
	\includegraphics[width=0.45\textwidth]{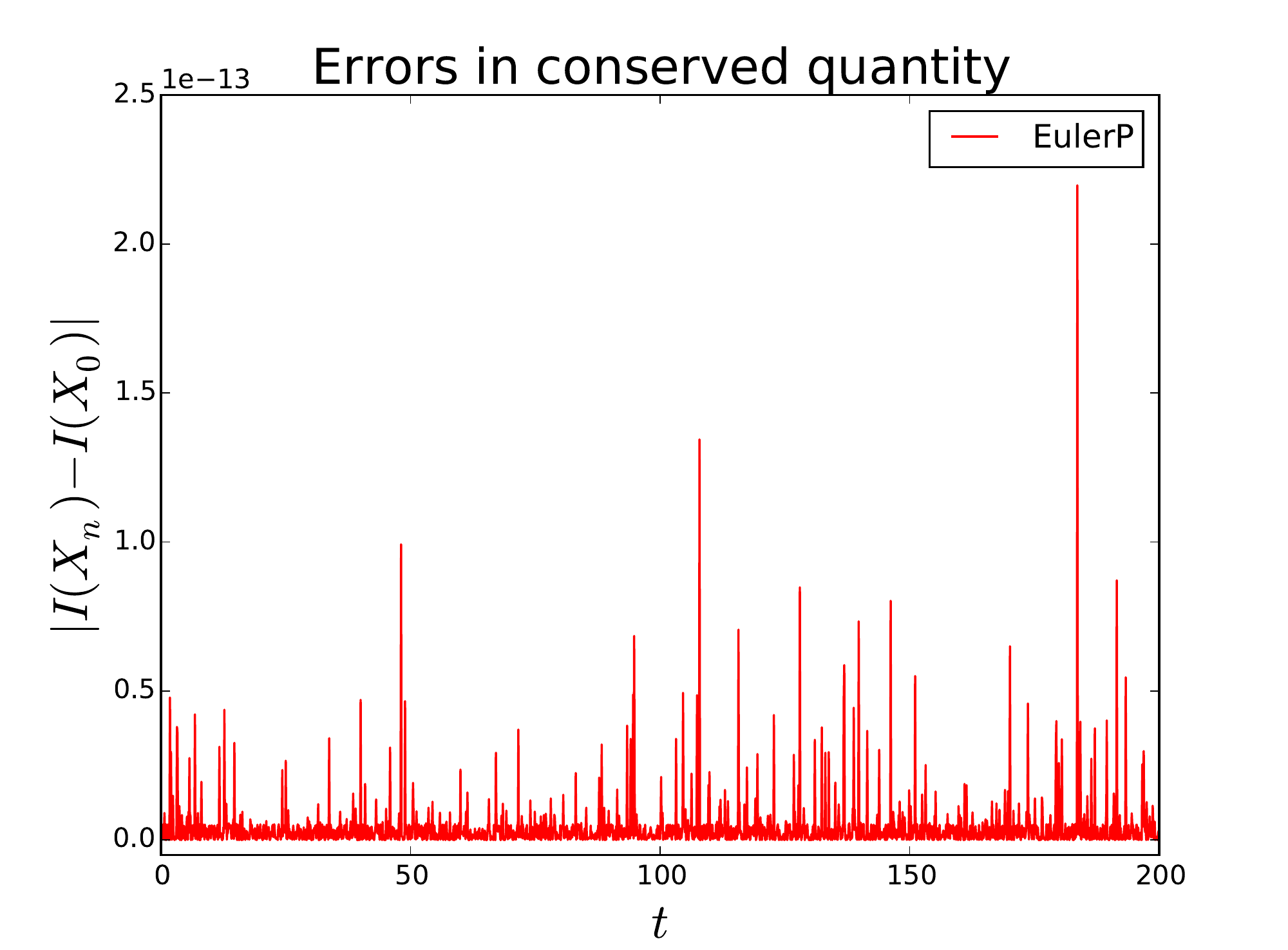}
	\caption{Errors in conserved quantity $I(x,y)=\frac{1}{2}(x^2+y^2)$ of the Euler method and EulerP method for the Kubo oscillator.}\label{f:kubo_CQ}
\end{figure}

\subsection{Example 2. Stochastic mathematical pendulum}
Consider the stochastic perturbation of a mathematical pendulum system
\begin{equation}\label{pend}
	d
	\begin{pmatrix}
		P(t) \\
		Q(t) \\
	\end{pmatrix}
	=
	\begin{pmatrix}
		-\sin\big(Q(t)\big) \\
		P(t) \\
	\end{pmatrix}
	\Big( dt + c_1\circ dW_1(t) + c_2\circ dW_2(t)\Big),
\end{equation}
with two independent Wiener process $W_1$ and $W_2$, and two real non-negative parameters $c_1$ and $c_2$.
This is also a $2$-dimensional stochastic Hamiltonian system but with a non-quadratic conserved quantity
\begin{equation}\label{e:I_pend}
	I(p,q) = \frac{1}{2}p^2 - \cos(q).
\end{equation}

\begin{table}[hbtp]
	\caption{Mean-square errors of stochastic mathematical pendulum}
	\label{table:pend}
	\begin{tabular}{@{}lccccccr@{}}
		\toprule
		$h$                 & $2^{-3}$ & $2^{-4}$ & $2^{-5}$ & $2^{-6}$ & $2^{-7}$ & $2^{-8}$ & Order \\ \midrule
		Euler               & 3.06E-01 & 1.92E-01 & 1.30E-01 & 8.89E-02 & 6.55E-02 & 4.78E-02 &  0.53 \\ \midrule
		EulerP              & 1.25E-01 & 6.88E-02 & 3.78E-02 & 2.15E-02 & 1.23E-02 & 7.48E-03 &  0.82 \\ \midrule
		Milstein            & 2.17E-01 & 9.96E-02 & 4.73E-02 & 2.27E-02 & 1.13E-02 & 5.43E-03 &  1.06 \\ \midrule
		MilsteinP\quad\quad & 1.09E-01 & 5.53E-02 & 2.73E-02 & 1.32E-02 & 6.82E-03 & 3.29E-03 &  1.01 \\ \midrule
		Mid                 & 4.72E-02 & 2.22E-02 & 1.04E-02 & 4.75E-03 & 2.39E-03 & 1.28E-03 &  1.05 \\ \midrule
		MidP                & 4.72E-02 & 2.22E-02 & 1.04E-02 & 4.75E-03 & 2.39E-03 & 1.28E-03 &  1.05 \\ \midrule
		T3/2                & 4.36E-02 & 1.44E-02 & 5.16E-03 & 1.70E-03 & 6.57E-04 & 2.27E-04 &  1.51 \\ \midrule
		T3/2P               & 3.63E-02 & 1.25E-02 & 4.34E-03 & 1.45E-03 & 5.57E-04 & 1.93E-04 &  1.51 \\ \midrule
		T2                  & 1.34E-02 & 3.68E-03 & 1.07E-03 & 2.42E-04 & 6.19E-05 & 1.56E-05 &  1.96 \\ \midrule
		T2P                 & 1.12E-02 & 2.96E-03 & 7.73E-04 & 1.89E-04 & 5.17E-05 & 1.26E-05 &  1.96 \\ \bottomrule
		                    &
	\end{tabular}
\end{table}

In this numerical test, we set $c_1 = 1$, $c_2 = 0.5$ and initial value $(p_0 , q_0)=(0.1 , 1)$. First we check the mean-square convergence order of these methods. The reference solution is computed using the T2 method with step-size $h_{\mathrm{ref}} = 2^{-14}$ and the expectation is realized using the average of $M=10000$ independent samples as previous example. The results are shown in Table \ref{table:pend} and Fig. \ref{f:pend_order}, from which we note that these methods are able to reach the theoretical mean-square order as we expect.

\begin{figure}[htbp]
	\centering
	\includegraphics[width=0.7\textwidth]{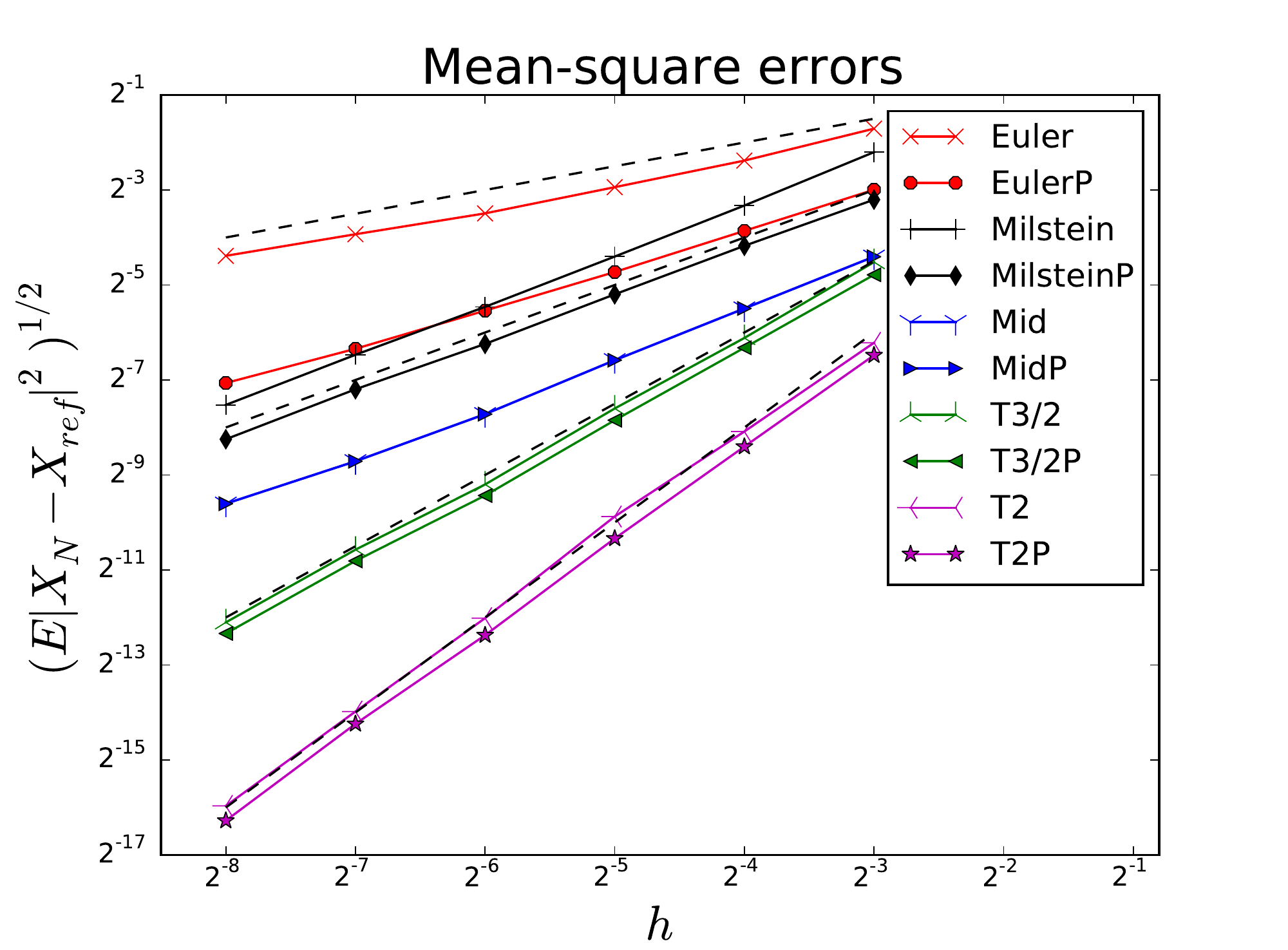}
	\caption{Mean-square errors at $T=1$ of these methods for the mathematical pendulum system. The dashed reference lines have slopes 0.5, 1, 1.5 and 2.}\label{f:pend_order}
\end{figure}

Next we consider the geometric property of the numerical solution in the phase space. Fig. \ref{f:pend_path} shows numerical sample paths of Euler and EulerP methods. In this test, we set $T=200$. As discussed before, the projection one can ensure that the numerical solution runs alone the invariant submanifold which is like a circle in this example because of the initial value. We can also check this numerically. Fig. \ref{f:pend_CQ} displays the errors in the conserved quantity $I(p,q)$ \eqref{e:I_pend} along the numerical solution by Euler and EulerP methods. The time interval is $ [0,100]$ and the time step-size is $h=0.01$. The left part compares these two methods to show the advantage of the projection one while the right one shows just the projection one individually. Thus the  conserved quantity is preserved quite well up to the accuracy of Newton iteration.

\begin{figure}[htbp]
	\centering
	\includegraphics[width=0.8\textwidth]{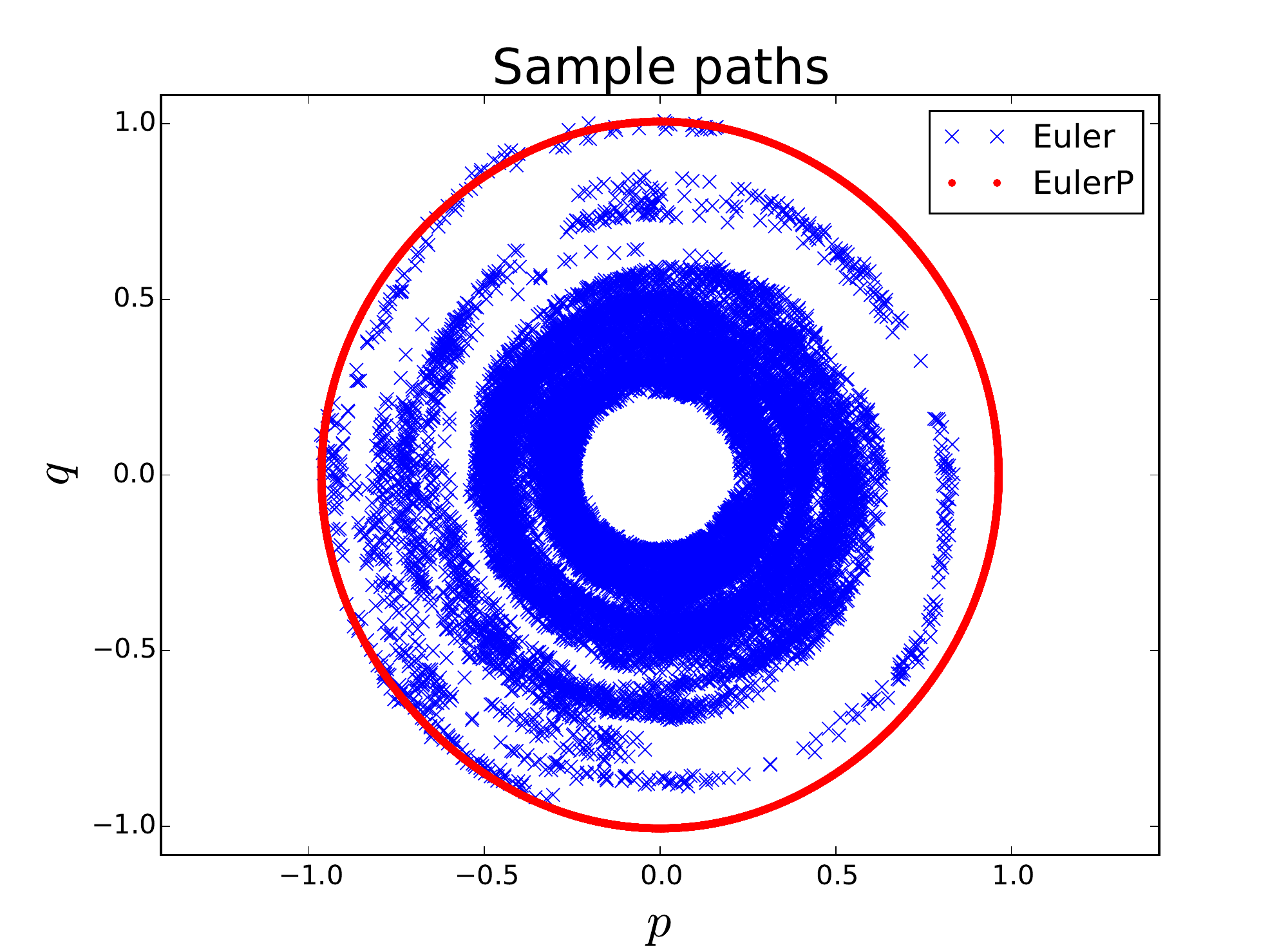}
	\caption{Numerical sample paths of the stochastic mathematical pendulum produced by Euler and EulerP methods.}\label{f:pend_path}
\end{figure}

\begin{figure}[htbp]
	\centering
	\includegraphics[width=0.45\textwidth]{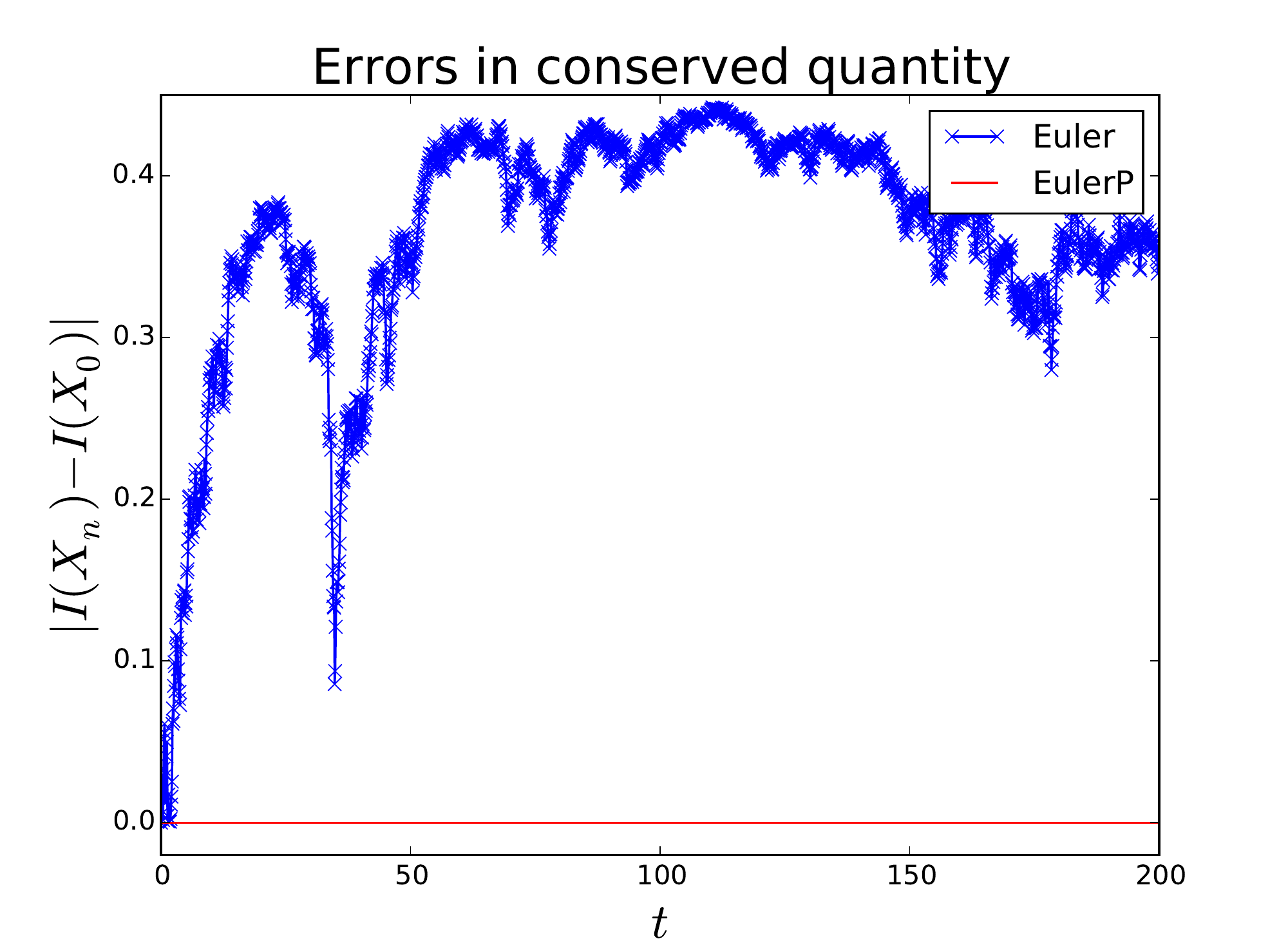}
	\includegraphics[width=0.45\textwidth]{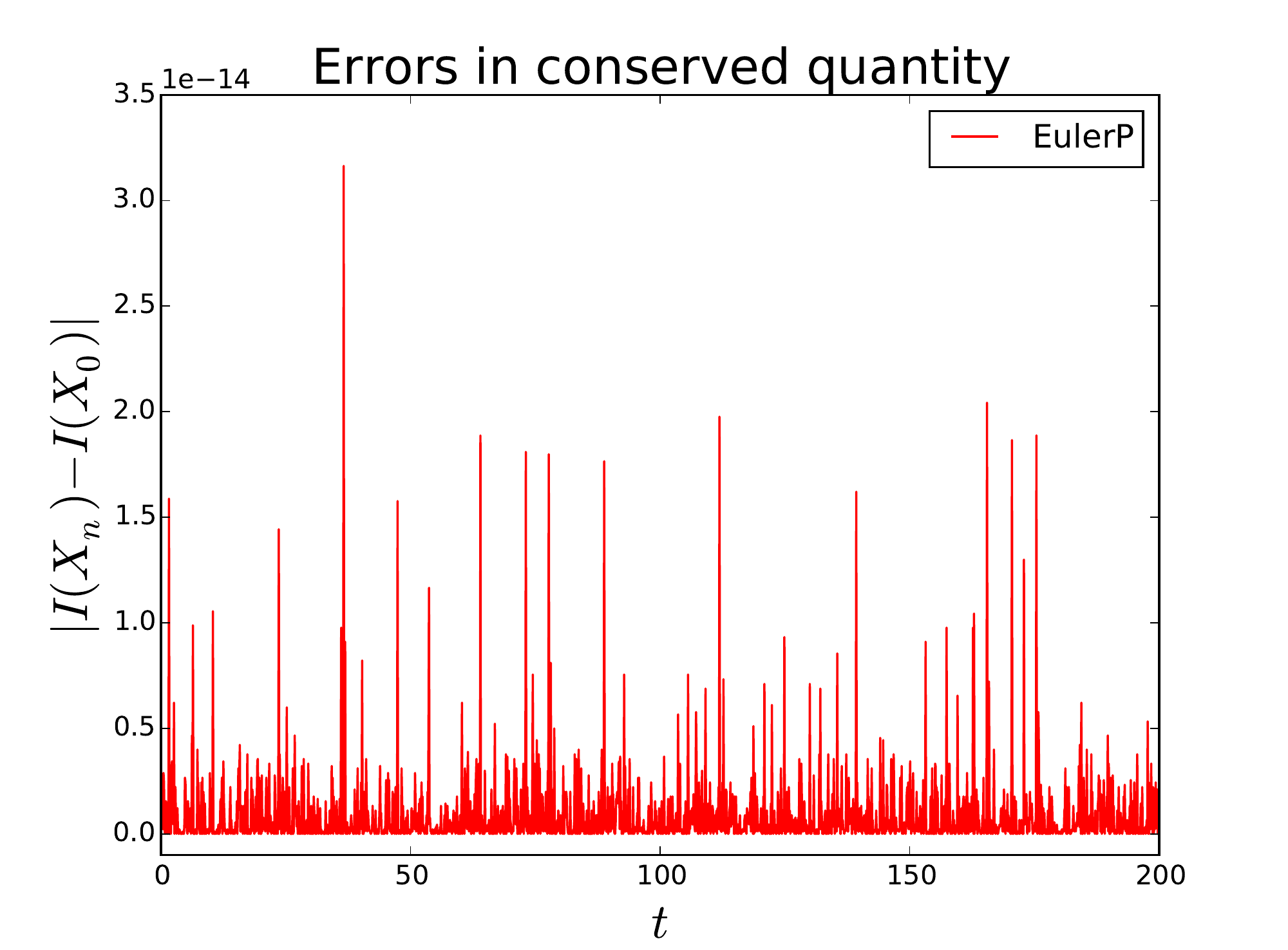}
	\caption{Errors in conserved quantity $I(p,q)=\frac{1}{2}p^2-\cos(q)$ of the Euler and EulerP methods for the stochastic pendulum system.}\label{f:pend_CQ}
\end{figure}

\subsection{Example 3. Stochastic cyclic Lotka-Volterra system}
Consider a three-dimensional SDE system
\begin{equation}\label{lk}
	d\left(
	\begin{array}{c}
		X(t) \\
		Y(t) \\
		Z(t) \\
	\end{array}
	\right) =
	\left(
	\begin{array}{c}
		X(t)\big(Z(t)-Y(t)\big) \\
		Y(t)\big(X(t)-Z(t)\big) \\
		Z(t)\big(Y(t)-X(t)\big) \\
	\end{array}
	\right) dt + c
	\left(
	\begin{array}{c}
	X(t)\big(Z(t)-Y(t)\big) \\
	Y(t)\big(X(t)-Z(t)\big) \\
	Z(t)\big(Y(t)-X(t)\big) \\
	\end{array}	\right) \circ dW(t),
\end{equation}
where $c$ is a real-valued constant.

System \eqref{lk} can be regarded as a cyclic Lotka-Volterra system of competing 3-species in a chaotic environment \cite{chen2014conservative,misawa1999conserved}. It can be easily verified that \eqref{lk} has two conserved quantities
\begin{equation}
	\begin{aligned}\label{e:lkCQ}
		I_1(x,y,z) &= x+y+z, \\
		I_2(x,y,z) &= x\cdot y\cdot z.
	\end{aligned}
\end{equation}
Note that $I_1$ is linear and even the simplest Euler method is able to exactly preserve it, but $I_2$ is not trivial here. In addition, if we just apply the discrete gradient methods or AVF methods to this model for single conserved quantity $I_1$ or $I_2$ here, the other one cannot be preserved.

Here we take $c=0.5$, and the initial value $X_0=(x_0, y_0, z_0) = (1,2,1)$. Then the  exact solution of \eqref{lk} lies in the one-dimensional submanifold
\begin{equation*}
	\mathcal{M}_{X_0} = \Big\{(x,y,z)\mid I_1(x,y,z)=x_0+y_0+z_0,\quad  I_2(x,y,z)=x_0\cdot y_0\cdot z_0\Big\},
\end{equation*}
which is a closed curve in 3-dimensional phase space.

As previous examples, we first verify the mean-square order of the proposed projection methods. Table \ref{table:lk} and Fig. \ref{f:lk_order} show the corresponding mean-square errors of these methods estimated by the average of $M=10000$ independent sample paths. We observe that the EulerP method still has mean-square order 0.5 unlike the Kubo oscillator \eqref{kubo}, so in general the projection methods cannot increase the order. Up to now, we can conclude that all the projection methods considered here are of the same mean-square order as their supporting ones.

\begin{table}[htbp]
	\caption{Mean-square errors of the stochastic cyclic Lotka-Volterra system}
	\label{table:lk}
	\begin{tabular}{@{}lccccccr@{}}
		\toprule
		$h$                 & $2^{-5}$ & $2^{-6}$ & $2^{-7}$ & $2^{-8}$ & $2^{-9}$ & $2^{-10}$ & Order \\ \midrule
		Euler               & 1.52E-01 & 9.72E-02 & 6.39E-02 & 4.35E-02 & 3.15E-02 & 2.31E-02  &  0.54 \\ \midrule
		EulerP              & 5.96E-02 & 3.75E-02 & 2.24E-02 & 1.54E-02 & 1.10E-02 & 7.80E-03  &  0.59 \\ \midrule
		Milstein            & 9.98E-02 & 4.88E-02 & 2.42E-02 & 1.18E-02 & 5.81E-03 & 2.85E-03  &  1.03 \\ \midrule
		MilsteinP\quad\quad & 5.09E-02 & 2.48E-02 & 1.25E-02 & 6.06E-03 & 3.00E-03 & 1.48E-03  &  1.02 \\ \midrule
		Mid                 & 1.51E-02 & 7.38E-03 & 3.73E-03 & 1.90E-03 & 8.62E-04 & 4.53E-04  &  1.02 \\ \midrule
		MidP                & 1.46E-02 & 7.11E-03 & 3.58E-03 & 1.82E-03 & 8.24E-04 & 4.33E-04  &  1.02 \\ \midrule
		T3/2                & 1.03E-02 & 3.42E-03 & 1.24E-03 & 4.31E-04 & 1.57E-04 & 5.75E-05  &  1.49 \\ \midrule
		T3/2P               & 9.72E-03 & 3.12E-03 & 1.19E-03 & 3.90E-04 & 1.21E-04 & 2.83E-05  &  1.65 \\ \midrule
		T2                  & 3.58E-03 & 8.72E-04 & 2.07E-04 & 5.23E-05 & 1.40E-05 & 3.46E-06  &  2.00 \\ \midrule
		T2P                 & 3.38E-03 & 8.34E-04 & 2.00E-04 & 5.35E-05 & 1.35E-05 & 3.46E-06  &  1.98 \\ \bottomrule
		                    &
	\end{tabular}
\end{table}

\begin{figure}[htbp]
	\centering
	\includegraphics[width=0.8\textwidth]{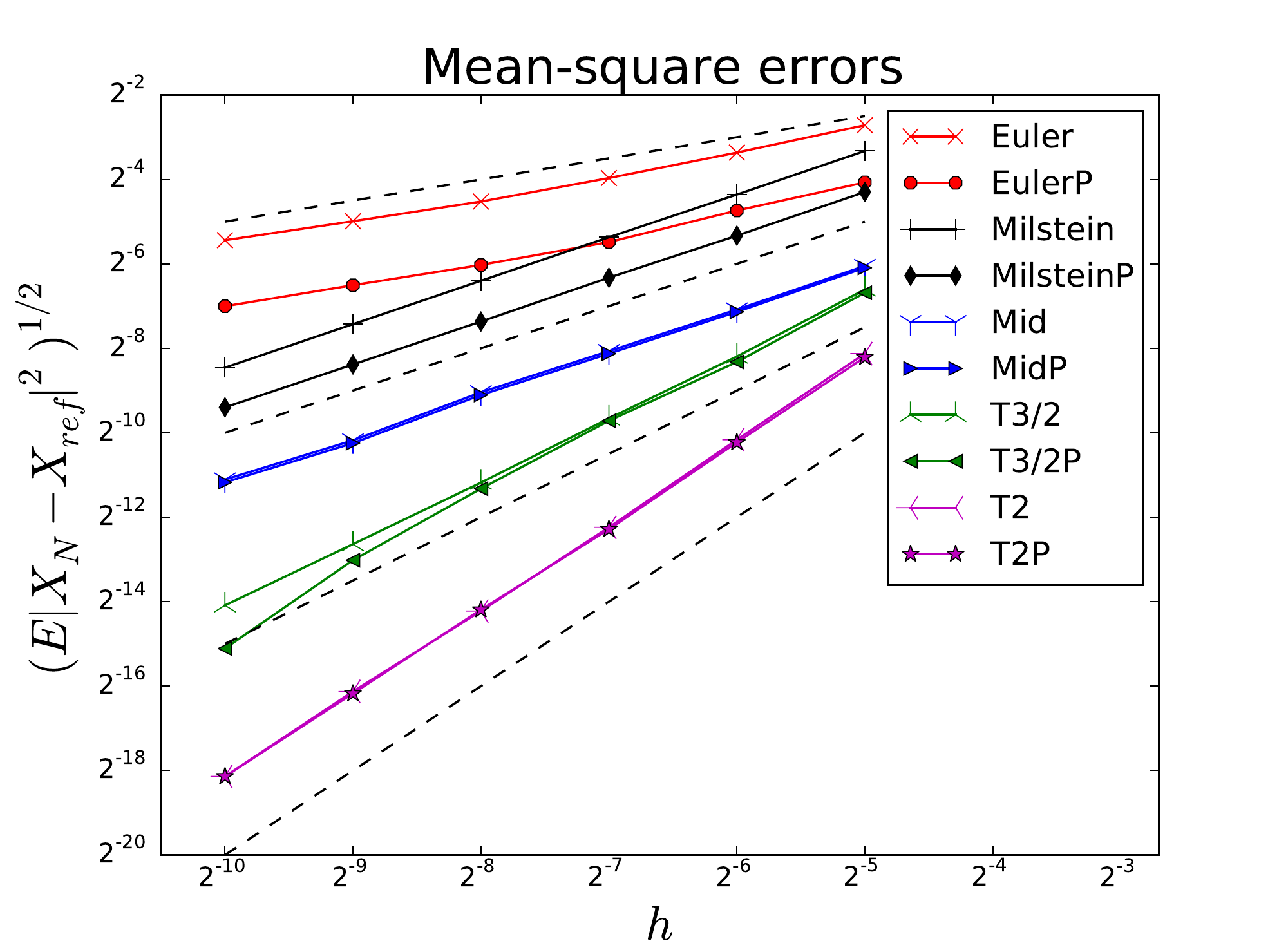}
	\caption{Mean-square errors of methods for stochastic cyclic Lotka-Volterra system. The dashed reference lines have slope 0.5, 1, 1.5 and 2 respectively. }\label{f:lk_order}
\end{figure}

For visualization purpose, Fig. \ref{f:lk_path} shows numerical trajectories in 3-dimensional phase space given by the Euler and the EulerP methods. The step-size $h$ is chosen to be 0.01. We directly observe that the numerical trajectory of the EulerP method stays exactly on the initial invariant submanifold $\mathcal{M}_{X_0}$ while the other one do not. The other higher order projection methods proposed in Sect. \ref{s:multi} possess this similar property as well. Fig. \ref{f:lk_CQ} illustrates this property more clearly in errors of conserved quantities. Since system \eqref{lk} has two conserved quantities \eqref{e:lkCQ}, the errors here are estimated path-wise as
\[
	error(I_1,I_2) = \sqrt{\Big(I_1(X_{t_n}) - I_1(X_0)\Big)^2 + \Big(I_2(X_{t_n}) - I_2(X_0)\Big)^2}.
\]
We notice in the left part of Fig. \ref{f:lk_CQ} that the mid-point method seems to behave as well as the EulerP in preserving the conserved quantities. But if we just compare these two, the projection one behaves much better than the mid-point method since $I_2(x)$ in \eqref{e:lkCQ} is not quadratic.

\begin{figure}[htbp]
	\centering
	\includegraphics[width=0.9\textwidth]{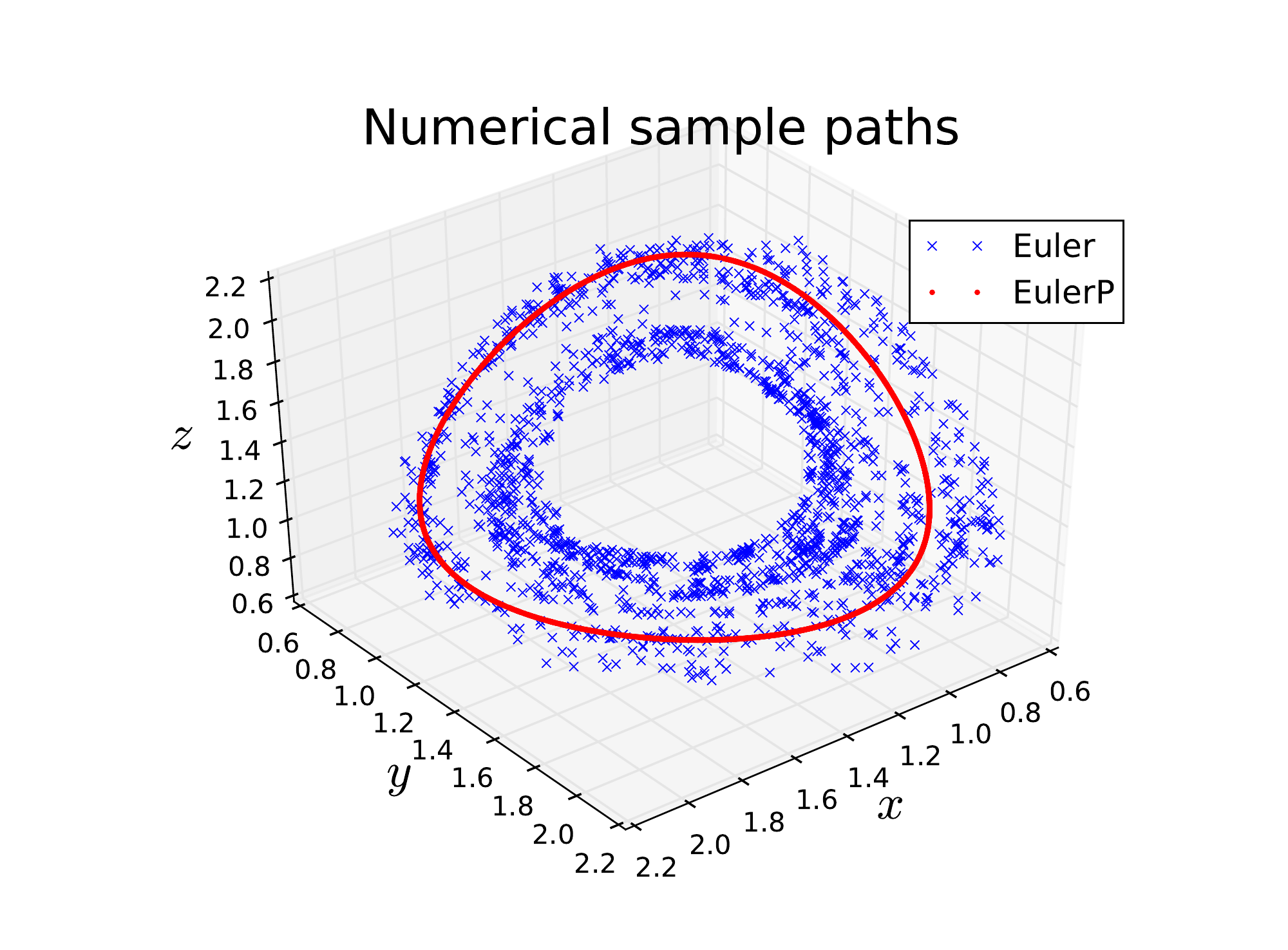}
	\caption{Numerical sample paths of the stochastic cyclic Lotka-Volterra system produced by Euler and EulerP methods.}\label{f:lk_path}
\end{figure}

\begin{figure}[htbp]
	\centering
	\includegraphics[width=0.45\textwidth]{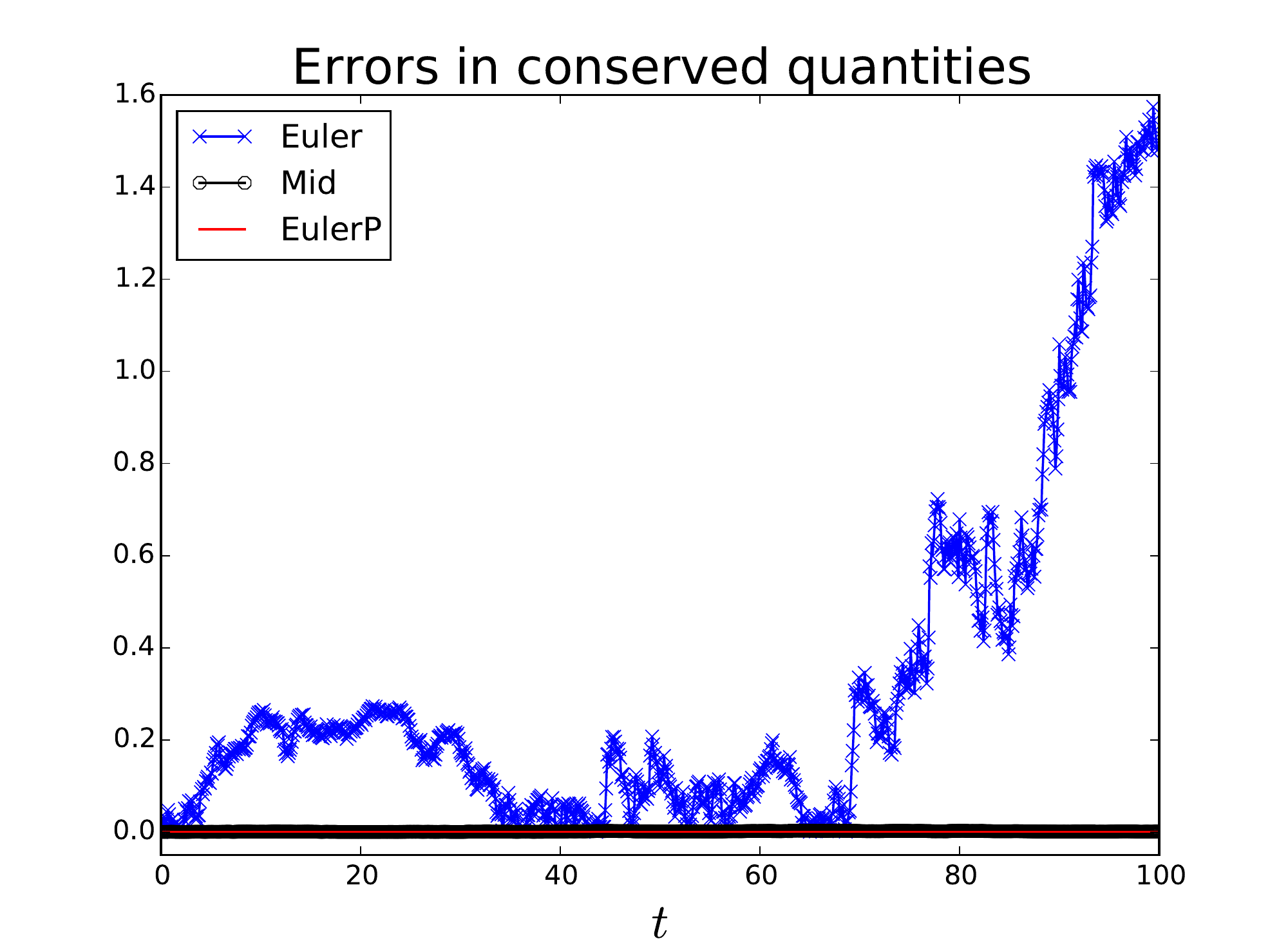}
	\includegraphics[width=0.45\textwidth]{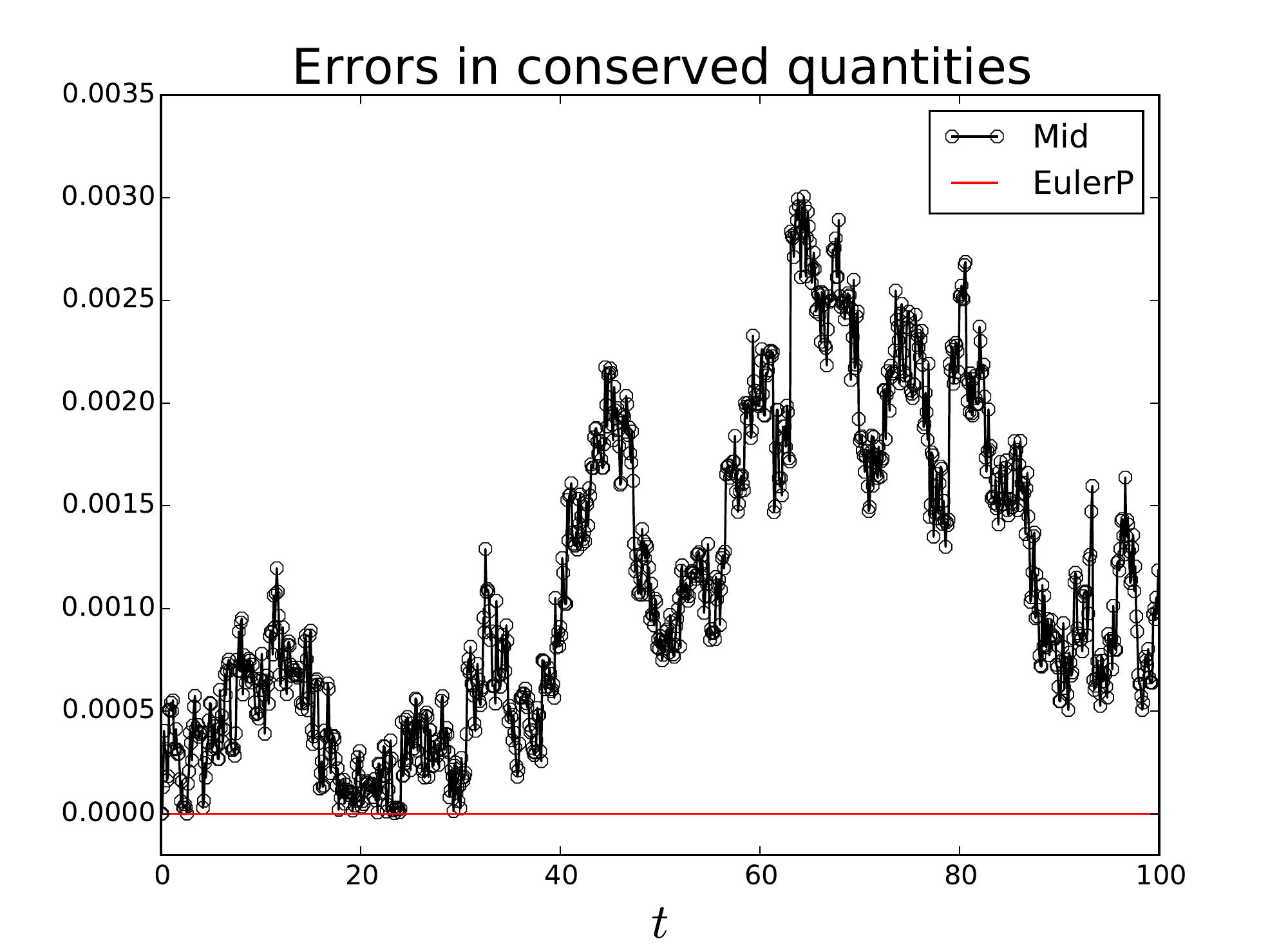}
	\caption{Errors in conserved quantities $error(I_1,I_2)$ of the Euler, EulerP and MidP methods for the stochastic  cyclic Lotka-Volterra system.}
	\label{f:lk_CQ}
\end{figure}

\section{Conclusions}
In conclusion, we generalize the projection methods to construct numerical methods which not only preserve single or multiple conserved quantities but also reach high strong order for stochastic differential equations. The obstacle of this problem lies in constructing numerical methods that preserve more than one conserved quantities if the system possesses, and increasing the order of convergence to some degree. Thus, the projection methods here couple a general supporting one-step method with a projection step so that they are able to have the same mean-square order under certain conditions. Using these projection methods, we are able to acquire high mean-square order methods which preserve multiple conserved quantities simultaneously. Eventually, three numerical examples with single or multiple conserved quantities are performed. We choose several common one-step approximations as the supporting methods before projection, which have mean-square order 0.5, 1, 1.5 and 2 respectively, and all of these projection methods for these examples preserve the conserved quantities exactly with proper mean-square order. In addition, compared with the original supporting method, the projection one can even improve the convergence order for some particular systems, such as the EulerP for Kubo oscillator which has mean-square order 1 in contrast with the Euler-Maruyama method.

\bibliographystyle{abbrv}
\bibliography{projectionRef}  

\end{document}